\newcommand{\sk}{{\ensuremath{\sf k }}}
\DeclareMathOperator{\Tor}{Tor}
\newtheorem{conjeture}{ Conjecture}[section]
\newtheorem{definition}[conjeture]{ Definition}
\newtheorem{notation}[conjeture]{ Notation}
\newtheorem{example}[conjeture]{ Example}
\newtheorem{theorem}[conjeture]{ Theorem}
\newtheorem{lemma}[conjeture]{ Lemma}
\newtheorem{remark}[conjeture]{ Remark}
\newtheorem{corollary}[conjeture]{ Corollary}
\newtheorem{proposition}[conjeture]{ Proposition}
\renewcommand\dim{\text{\rm dim}}
\providecommand\pdim{\text{\rm pdim}}
\providecommand\Mon{\text{\rm Mon}}
\providecommand\F{{{\sf F}_\bullet}}
\providecommand\Tor{{\rm Tor}}
\providecommand\supp{{\rm supp}}
\providecommand\set{{\rm set}}
\providecommand\X{{\bf X}}
\providecommand\lcm{{\rm lcm}}
\providecommand\hgt{{\rm ht}}
\providecommand\reg{{\rm reg}}
\begin{document}
\title{Regularity, Rees algebra and Betti numbers of certain cover ideals}

\author[A. Kumar]{Ajay Kumar}

\address{Indian Institute of Technology Jammu, India.}

\email{ajay.kumar@iitjammu.ac.in}

\author[R. Kumar]{Rajiv Kumar}

\address{The LNM Institute of Information Technology, Jaipur.}

\email{gargrajiv00@gmail.com}

\date{\today}

\subjclass[2010]{Primary 13A30, 13D02, 05E40}

\keywords{Betti Numbers, Complete Graph, Complement Graph, Cover Ideal,  Rees Algebra, Regularity}

\maketitle
\begin{abstract}
	Let $S={\sf k}[X_1,\dots, X_n]$ be a polynomial ring, where ${\sf k}$ is a field. This article deals with the defining ideal of the Rees algebra of squarefree monomial ideal generated in degree $n-2$. As a consequence, we prove that Betti numbers of powers of the cover ideal of the complement graph of a tree do not depend on the choice of tree. Further, we study the regularity and Betti numbers of powers of cover ideals associated to certain graphs. 
\end{abstract}
\section{Introduction}
An interaction between commutative algebra and combinatorics provides new techniques to solve problems in each field. Monomial ideals play an important role to establish a connection between commutative algebra and combinatorics. In particular, given a graph $G$, one can associate a monomial ideal (edge ideal, cover ideal, path ideal), and study algebraic invariants of the corresponding ideal in terms of the combinatorial data of $G$.

Let $G$ be a graph on the vertex set V=$\{X_1,X_2,\dots,X_n \}$, and $S=\sk[X_1,X_2,\dots,X_n]$ be a polynomial ring over a field $\sk$. Then the ideal $I(G)=\langle X_iX_j:X_i \text{ is adjacent to } X_j\rangle$ is called the {\it edge ideal} of $G$. The Alexander dual of $I(G)$, denoted as $J(G)$, is called the \emph{cover ideal} of $G$. The main problem of interest in this article is to study algebraic invariants such as Hilbert series, regularity and Betti numbers of powers of cover ideals associated to certain graphs. 
The regularity of a monomial ideal is an important invariant
in commutative algebra and algebraic geometry and it measures the complexity of its minimal free resolution. It is known that for a homogeneous ideal $I\subset S$, $\reg(I^s)$ is a linear function of $s$ for $s\gg0$, i.e., there exist non-negative integers $a,b$
and $s_0$ such that $\reg(I^s)=as+b$ for all $s \geq s_0$. This result was proved by Cutkosky, Herzog and Trung \cite{CH} and independently by Kodiyalam \cite{VK}. While the constant $a$ is given by the maximum degree of minimal generators of $I$. On the other hand, $b$ and $s_0$ are not well understood and problem of finding their values 
is addressed by many authors, see \cite{AB,HCJ,TT,AN,ANS,AVT}. The problem of computing the regularity or finding bounds
on the regularity is a difficult problem. Thus one would like to provide bounds and give an explicit formula for the regularity of ideals associated to certain graphs (edge ideals, cover ideals). In the case of edge ideals and cover ideals, the regularity has been studied by various authors, e.g., see \cite{AB,AN,ANS,K}. Thus it is interesting to study the regularity of powers of cover ideals associated to certain graphs. 

Another object of interest is the Rees algebra of an ideal. The \emph{Rees algebra} of a homogeneous ideal $I\subset S$ is a bigraded algebra defined as $\mathcal{R}(I)=\oplus_{s\geq 0}I^st^s$. Rees algebra helps to study the asymptotic behaviour of an ideal and useful in computing the integral closure of powers of an ideal. Rees algebra of an ideal $I$ provides a condition such that $I^s$ has a linear resolution for all $s \geq 1$. 
R\"{o}mer in \cite{Ro} gives an upper bound for the regularity of powers of a homogeneous ideal in terms of $x$-regularity of corresponding Rees algebra $\mathcal{R}(I)$. In particular, if  $x$-regularity of $\mathcal{R}(I)$ is zero, then each power of $I$ has a linear resolution. For a homogeneous ideal $I$, the defining ideal of  $\mathcal{R}(I)$ is studied by many authors, see \cite{H,Hu,MU}, and D. Taylor in \cite{Ta} studied it for a monomial ideal. Further, Villarreal in  \cite{Vi} gives an explicit description of the defining ideal of the Rees algebra of any squarefree monomial ideal generated in degree $2$. Authors in \cite{KS,CS} study the generalized Newton dual of a monomial ideal $I$, and establish an isomorphism between the special fiber rings of $I$ and its generalized dual $\hat{I}$. In this paper, we observe that Rees algebras of $I$ and $\hat{I}$ need not be isomorphic and in a special case we give an explicit description of the defining ideal of the Rees algebra of $\hat{I}$ (see Proposition \ref{ReesGenProp}).

We now give a brief overview of the paper. Section \ref{Prelim} covers some basics of graph theory and commutative algebra which are used throughout the paper.

In Section $3$, we study Rees algebras of cover ideals of certain graphs.  For a squarefree monomial ideal $J$ generated in degree $n-2$, we associate a graph $G_J$. Further, if $G_J$ is a connected graph, then we find the Betti number of $J$ in terms of combinatorial invariants of $G_J$ (see Theorem \ref{BettiThm}). Also, we study the Rees algebra of $J$ using the combinatorial properties of $G_J$. In particular, we prove that if $G_J$ is a tree (unicyclic graph with an odd length cycle), then $\mathcal{R}(J)$ is a quadratic complete intersection (almost complete intersection). Hence  $J^s$ has a linear resolution for all $s \geq 1$, when $G_J$ is a tree.  As a consequence, if $J$ is the cover ideal of the complement graph of a tree, then the Hilbert series and Betti numbers of $J^s$ do not depend on the choice of tree.

In Section $4$, we give an combinatorial formula to compute the Betti numbers of powers of cover ideal of a graph $G$, where $G$ is either a complete graph or a complement graph of a tree. In Section $5$, we compute the regularity of powers of the cover ideals of complete multipartite graphs. Hence, we settle Conjecture 4.10 and 4.11 given by A. V. Jayanthan and N. Kumar in \cite{AN}.

\section{Preliminaries}\label{Prelim}
Let $S=\sk[X_1,\ldots,X_n]$. We use the following notation in this article.
\begin{notation} \hfill{} {\rm
\begin{enumerate}[a)]
\item $[n]=\{1,\dots, n\}$, $n\in \mathbb{N}$.
\item $\Mon(S)=$ set of all monomials in $S$.
\item For a monomial ideal $I$, we denote $M(I)$ be the minimal generating set of monomials of $I$

\item $w(X_i)=$ weight assigned on each variable $X_i$ in $S$. For $u=X_1^{a_1}\cdots X_n^{a_n} \in \Mon(S)$, the 
 degree of $u$ is given by $\deg_w(u)=\sum_{i=1}^{n}a_iw(X_i)$. Further, if $w(X_i)=1$ for all $i$, then we use $\deg(u)$ for $\deg_w(u)$.
\item For $u=X_1^{a_1}\cdots X_n^{a_n} \in \Mon(S)$, we denote $m_i(u)=a_i$.
\item For $v \in \Mon(S)$, we denote $\supp(v)=\{i:X_i \text{ divides } v\}$.
\item For any non empty subset $F \subset [n]$, we set $X_F=\prod\limits_{i\in F}X_i$.
\end{enumerate} }
\end{notation}
\begin{definition}{\rm\hfil{}\\\vspace*{-.4cm}
	\begin{enumerate}[i)]
		\item A finite simple \emph{graph} is an ordered pair $(V,E)$, where $V$ is a collection of \emph{vertices} and $E$ is a collection of subsets of $V$ with cardinality $2$. The elements of $E$ are called the \emph{edges} of a graph $G$. We assume that $V=[n]$.
		\item Let $G=(V,E)$ be a graph on the vertex set $V$. Then the \emph{complement graph} of $G$, denoted by $G^c$, is a graph on vertices $V$ such that $\{i,j\}$ is an edge of $G^c$ if and only if $\{i,j\}\notin E$.
		\item A graph $G=(V,E)$ is called a \emph{complete graph} if for every $i,j\in V$, we have $\{i,j\}\in E$. 
		\item A subset $C\subset [n]$ is called a \emph{cover set} of $G$ if for every $\{i,j\}\in E$, we have $\{i,j\}\cap C\neq \phi$. This set is called a \emph{minimal cover set} if for any $l \in C$, $C\setminus\{l\}$ is not a cover set of $G$.
		\item An \emph{independent set} of a finite graph $G$ is a subset $S \subset V$ such that $\{i,j \} \notin E$ for all $i,j \in S$. The \emph{independent complex} of a finite graph $G$ is a simplicial complex $\Delta(G)$ on $V$ whose facets are the maximal independent subsets of $G$.
		\item A \emph{cycle} of length $\ell$ in a graph $G$ is a subgraph $L$ of $G$ with edge set $$E(L)=\{\{i_1,i_2 \},\{i_2,i_3\},\ldots,\{i_{\ell},i_1\}  \}$$ such that $i_r \neq i_s$ for $r \neq s$.
		\item A \emph{chord} of a cycle $L$ in a graph $G$ is an edge $\{ i,j \}$ of $G$ such that $\{ i,j \}$ is not an edge of $L$ and $\{i,j\}\subset V(L)$. A graph $G$ is called a \emph{chordal graph}, if any cycle in $G$ of length $> 3$ has a chord.
	\end{enumerate}
}\end{definition}
In the following example, we illustrate the minimal cover sets of a graph $G$.
\begin{example}\label{example}{\rm\hfill{}\\\vspace*{-.4cm}
	\begin{enumerate}[a)]
		\item Let $G$ be a graph with $V=\{1,2,3,4\}$ and $E=\{\{1,2\},\{2,3\}\{3,4\},\{4,1\}\}$.  
	\[	\begin{tikzpicture}[baseline=(current bounding box.north), level/.style={sibling distance=50mm}]		
		\draw[-] (0,0) rectangle (1.5,1.5);
		\draw [fill] (0,0) circle [radius=0.08];
		\draw [fill] (0,1.5) circle [radius=0.08];		
		\draw [fill] (1.5,1.5) circle [radius=0.08];		
		\draw [fill] (1.5,0) circle [radius=0.08];		
		\node [left] at (0,1.5) { 1};
		\node [right] at (1.5,1.5) { 2};
		\node [left] at (0,0) { 4};	
		\node [right] at (1.5,0) { 3};
		\node[below] at (0.7,-.2){G};	
		\end{tikzpicture}
		\] 
		In this case, minimal cover sets of $G$ are $\{1,3\},\{2,4\}$.	
		\item Let $G$ be the complete graph on the vertex set $[n]$. Then $C$ is a minimal cover set of $G$ if and only if there exists $i \in [n]$ such that $C=[n]\setminus \{i\}$ .
	\end{enumerate}
}\end{example}
\begin{definition}[Edge and Cover Ideals]\label{coverIdeal}{\rm
	Let $G=(V,E)$ be a graph on the vertex set $V=[n]$ and $S=\sk[X_1,\dots,X_n]$. 
	\begin{enumerate}[i)]
		\item The \emph{edge ideal} of $G$, denoted by $I(G)$, is defined as
		$$I(G)=\left\langle X_iX_j: \{i,j\} \in E\right\rangle.$$
		\item The \emph{cover ideal} of $G$, denoted by $J(G)$, is defined as
		$$J(G)=\left\langle \prod\limits_{i\in C}X_i: C \text{ is a minimal cover set  of  } G\right\rangle.$$
	\end{enumerate}
}\end{definition}

\begin{example}{\rm\hfil{}\\\vspace*{-.4cm}
	\begin{enumerate}[a)]
		\item Let $G$ be as in Example \ref{example}(a). Then $J(G)=\langle X_1X_3,X_2X_4\rangle$ is the cover ideal of $G$.
		\item Let $G$ be the complete graph on the vertex set $V=[n]$. Then the cover ideal of $G$ is $$J(G)=\left\langle \prod\limits_{ j \neq i}X_j: i \in [n]\right\rangle.$$ 
	\end{enumerate}
}\end{example}
\begin{definition}\label{puredef}
	{\rm Let $M$ be a finitely generated graded $S$-module.
		\begin{enumerate}[i)] 
			\item Then $\beta^S_{i,j}(M) = (\dim_\sk(\Tor_i^S(M,\sk))_j$ is called the $(i,j)^{th}$ \emph{graded Betti number} of $M$. 
			\item The \emph{regularity} of $M$, denoted as $\reg(M)$, is defined as $$\reg(M)=\max\{j-i:\beta^S_{i,j}(M)\neq 0\}.$$
			\item The \emph{projective dimension} of $M$, denoted as $\pdim_S(M)$, is defined as $$\pdim_S(M)=\max\{i:\beta^S_{i,j}(M)\neq 0 ~\text {for ~ some}~ j\}.$$
			
			\item Let $\pdim_S(M) = p$. If for each $i\in \{0,1,\ldots,p  \}$, there exists a number $d_i$ such that 
			$\beta_{i,j}^S(M)=0$ for $j \neq d_i$,
			 then $M$ is said to have a {\it pure resolution of type} $(d_0,d_1,\ldots, d_p)$. Further, if $d_0=d$ and $d_i=d+i$ for all $i \in [p]$, then we say that $M$ has a \emph{linear resolution}.
\end{enumerate}
}\end{definition}
\begin{definition}{\rm  \label{quotient}
	Let $I$ be a monomial ideal having linear quotients with respect to some order  $ u_1,\dots, u_r$ of elements of $M(I)$. Then, for $2\leq j\leq r$, we define $I_j=\langle u_1,\dots, u_{j-1}\rangle$  and $\set(u_j)=\{X_k: X_k\in I_j:u_j\}.$ We set $A_t(I^s)=\{u \in M(I^s): |\set(u)|=t \}.$
}\end{definition}
We use the following result of Herzog and Takayama \cite[Lemma 1.5]{HT},  to compute the Betti numbers of certain monomial ideals having linear quotients.
\begin{lemma}[Herzog-Takayama, \cite{HT}]
\label{resRem}
Suppose that $I$ has linear quotients with respect to order $u_1,\ldots, u_r$
of generators of $I$ and $\deg(u_1)\leq \deg(u_2) \leq \dots \leq \deg(u_r)$. Then the iterated
mapping cone $F$, derived from the sequence $u_1,\ldots,u_m$, is a minimal graded free
resolution of $S/I$ and for all $i>0$, the symbols
	$f(\sigma;u)~\text{with}~u\in M(I), \sigma \subset \set(u), |\sigma|= i-1$,
	 forms a basis for $F_i$.
	Moreover,  $\deg(f(\sigma, u))=\deg(u)+|\sigma|$. 
	\end{lemma}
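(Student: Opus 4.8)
The plan is to prove this by induction on the number $r$ of generators, constructing the minimal resolution of $S/I$ as an iterated mapping cone. Write $I_j=\langle u_1,\dots,u_j\rangle$, so that $I_1\subset\cdots\subset I_r=I$, and put $d_j=\deg(u_j)$. For each $j\geq 2$ there is a short exact sequence
\[0\longrightarrow \big(S/(I_{j-1}:u_j)\big)(-d_j)\xrightarrow{\ \cdot\, u_j\ } S/I_{j-1}\longrightarrow S/I_j\longrightarrow 0.\]
The linear quotients hypothesis says exactly that $I_{j-1}:u_j=\langle \set(u_j)\rangle$ is generated by a subset of the variables, so $S/(I_{j-1}:u_j)$ is resolved by the Koszul complex on $\set(u_j)$. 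After the shift by $-d_j$, this Koszul resolution has a basis indexed by subsets $\sigma\subseteq\set(u_j)$, with the element attached to $\sigma$ sitting in homological degree $|\sigma|$ and internal degree $d_j+|\sigma|$.

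Assuming inductively that $F^{(j-1)}$ is a minimal free resolution of $S/I_{j-1}$ whose basis is $\{f(\sigma;u_k):k<j,\ \sigma\subseteq\set(u_k)\}$ with the stated degrees, I would lift multiplication by $u_j$ to a comparison map $\psi$ from the shifted Koszul complex to $F^{(j-1)}$ and set $F^{(j)}=\operatorname{cone}(\psi)$. By the standard mapping cone principle this is a free resolution of $S/I_j$, and in homological degree $i$ its term is the direct sum of $F^{(j-1)}_i$ and the degree $i-1$ part of the shifted Koszul complex. The Koszul summand contributes precisely the new symbols $f(\sigma;u_j)$ with $|\sigma|=i-1$, each of internal degree $d_j+|\sigma|=\deg(u_j)+|\sigma|$, which simultaneously yields the asserted basis of $F_i$ and the degree formula $\deg(f(\sigma;u))=\deg(u)+|\sigma|$.

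The crux of the argument, and the only place the ordering $\deg(u_1)\leq\cdots\leq\deg(u_r)$ is needed, is minimality: I must check that every entry of the differential of $F^{(j)}$ lies in $\m$. The maps inherited from $F^{(j-1)}$ are minimal by induction, and those from the Koszul complex are built out of the variables in $\set(u_j)$, so they too have entries in $\m$; thus it suffices to rule out unit entries in the comparison map $\psi$. Such a unit entry would be a degree preserving arrow from a Koszul generator for some $\sigma$ with $|\sigma|=i-1$, of internal degree $\deg(u_j)+(i-1)$, to a generator $f(\tau;u_k)$ of $F^{(j-1)}_{i-1}$ with $k<j$ and $|\tau|=i-2$, of internal degree $\deg(u_k)+(i-2)$. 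Equality of these degrees would force $\deg(u_k)=\deg(u_j)+1$, contradicting $\deg(u_k)\leq\deg(u_j)$; hence no such entry occurs and the iterated cone is minimal. The base case $j=1$ is the resolution $0\to S(-d_1)\to S\to S/I_1\to 0$ corresponding to $\set(u_1)=\emptyset$, and the result then follows by induction on $j$. I expect the main obstacle to be the careful bookkeeping that matches homological and internal degrees through the cone construction, since the minimality inequality above is exactly strict enough to forbid cancellation only because the generators are ordered by nondecreasing degree.
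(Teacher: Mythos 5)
The paper states this as a quoted result of Herzog--Takayama \cite{HT} and supplies no proof of its own. Your iterated mapping-cone argument --- resolving $S/(I_{j-1}:u_j)$ by the Koszul complex on $\set(u_j)$ and using the nondecreasing-degree hypothesis to rule out unit entries in the comparison map $\psi$ --- is correct and is essentially the standard proof from that reference.
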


\section{Rees Algebra of a Monomial Ideal}

Let $J\subset S$ be a squarefree monomial ideal generated in degree $n-2$. Then for every $u\in M(J)$, there exist $i,j$ such that $[n]\setminus\supp(u)=\{i,j\}$. Now we associate a graph $G_J$ to the ideal $J$ on the vertex set $[n]$ with edge set  $\{\{i,j\}: [n]\setminus\supp(u)=\{i,j\} \text{ for some } u \in M(J)\}.$ In this section, we discuss the Rees algebra $\mathcal{R}(J)$ and give an explicit description of the defining ideal of $\mathcal{R}(J)$ in terms of properties of $G_J$. For better understanding of defining ideal of $\mathcal{R}(J)$, we compute the Betti numbers of $J$. In particular, we find the defining ideal of the Rees algebra of the cover ideal of a graph whose complement graph is triangle free.

\begin{definition}{\rm
		Let $I$ be a homogeneous ideal of $S$. Then the \emph{Rees algebra} of $I$ is defined as $S[It]=\bigoplus\limits_{s\geq 0}I^st^s$, and it is denoted by $\mathcal{R}(I)$. 
		
		Let $I=\langle u_1,\dots, u_r\rangle$ and $R=S[T_1,\dots, T_r]$, then there is a surjective ring homomorphism $\phi:R\longrightarrow \mathcal{R}(I)$ by setting $\phi(X_i)=X_i$ for $i=1,\ldots,n$ and $\phi(T_j)=u_jt$ for $j=1,\ldots,r$. Set $K=\ker(\phi)$. The ideal $K$ is called the \emph{defining ideal} of the Rees algebra. Further, note that $K=\bigoplus\limits_{i\geq 1} K_i$, where $K_i$ is a homogeneous component of degree $i$ in $T_j$-variables. If $K=RK_1$, then $I$ is said to be of \emph{linear type}. 
}\end{definition}

\begin{notation}{\rm
		Let $I$ be a monomial ideal generated by $u_1,\dots, u_r $ and $I_s$ be a set of all non-decreasing sequences $\alpha=(i_1,\ldots,i_s)$ in $[r]$ of length $s$. Then for any $\alpha\in I_s$ we denote $u_\alpha =u_{i_1}u_{i_2}\cdots u_{i_s}$, $T_\alpha=T_{i_1}T_{i_2}\cdots T_{i_s}$ and $\widehat{u}_{\alpha_l} =u_{i_1}\cdots \widehat{u_{i_\ell}}\cdots u_{i_s}$. For any $\alpha,\beta\in I_s$, we define $$T_{\alpha,\beta}=\dfrac{\lcm[u_\alpha, u_\beta]}{u_\beta}T_\beta-\dfrac{\lcm[u_\alpha, u_\beta]}{u_\alpha}T_\alpha$$.
}\end{notation}
Defining ideal of the Rees algebra of a monomial ideal is studied by D. Taylor in \cite{Ta}. In order to prove the Proposition \ref{ReesGenProp}, we use the following result.
\begin{theorem}[Taylor, \cite{Ta}]\label{ReesGenThm}
	Let $S=\sk[X_1,\dots, X_n]$ and $I=\langle u_1,\dots, u_r\rangle $ be a monomial ideal in $S$. Then $\mathcal{R}(I)\simeq R/K$, where $R=S[T_1,\ldots,T_r], K=RK_1+R(\bigcup\limits_{s=2}^{\infty} K_s)$ with $K_s=\{T_{\alpha,\beta}: \alpha,  \beta\in I_s\}$.
\end{theorem}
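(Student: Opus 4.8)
The plan is to exploit the fine multigrading that makes $\phi$ homogeneous, which will force the kernel $K$ to be a binomial ideal, and then to recognize each such binomial as a monomial multiple of one of the $T_{\alpha,\beta}$. First I would equip $R=S[T_1,\dots,T_r]$ with the $\BN^n\times\BN$-grading in which $X_i$ has degree $(e_i,0)$ and $T_j$ has degree $(\mathbf{a}_j,1)$, where $\mathbf{a}_j\in\BN^n$ is the exponent vector of $u_j$. Giving $S[t]$ the grading $\deg X_i=(e_i,0)$, $\deg t=(0,1)$, the map $\phi$ becomes homogeneous, so $K$ is a multigraded ideal. The inclusion $R\{T_{\alpha,\beta}\}\subseteq K$ is then immediate: since $\phi(T_\alpha)=u_\alpha t^s$ and $\phi(T_\beta)=u_\beta t^s$ for $\alpha,\beta\in I_s$, we get $\phi(T_{\alpha,\beta})=\lcm[u_\alpha,u_\beta]t^s-\lcm[u_\alpha,u_\beta]t^s=0$, so each $T_{\alpha,\beta}\in K$.

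The heart of the argument is the observation that $K$ is spanned over $\sk$ by binomials. Fix a multidegree $(\gamma,s)\in\BN^n\times\BN$. The monomials of $R$ of this degree are exactly the $X^aT_\alpha$ with $\alpha\in I_s$ and $a+\mathbf{a}_\alpha=\gamma$, where $\mathbf{a}_\alpha$ is the exponent vector of $u_\alpha$, and each of them satisfies $\phi(X^aT_\alpha)=X^au_\alpha t^s=X^\gamma t^s$. Thus $\phi$ carries every monomial of degree $(\gamma,s)$ to the single element $X^\gamma t^s$; restricted to the finite-dimensional piece $R_{(\gamma,s)}$ it is the $\sk$-linear map sending each basis monomial to the same vector, so its kernel is spanned by the differences $X^aT_\alpha-X^bT_\beta$ of monomials lying in the same degree. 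Consequently $K$ is generated as an ideal by all such binomials, where $X^au_\alpha=X^bu_\beta$.

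It then remains to write each such binomial as a multiple of a generator $T_{\alpha,\beta}$. Given $\alpha,\beta\in I_s$ and monomials $X^a,X^b$ with $X^au_\alpha=X^bu_\beta=X^\gamma$, both $u_\alpha$ and $u_\beta$ divide $X^\gamma$, hence so does $\lcm[u_\alpha,u_\beta]$; set $m=X^\gamma/\lcm[u_\alpha,u_\beta]\in\Mon(S)$. A direct computation gives
$$m\,T_{\alpha,\beta}=\frac{X^\gamma}{u_\beta}T_\beta-\frac{X^\gamma}{u_\alpha}T_\alpha=X^bT_\beta-X^aT_\alpha,$$
so $X^aT_\alpha-X^bT_\beta=-m\,T_{\alpha,\beta}\in R\,T_{\alpha,\beta}$. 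This proves $K\subseteq R\{T_{\alpha,\beta}:\alpha,\beta\in I_s,\ s\geq1\}$, and together with the easy inclusion above yields equality. Finally, the $s=1$ binomials $T_{(i),(j)}$ span the degree-one syzygy module $K_1$, while the binomials with $s\geq2$ account for $\bigcup_{s\geq2}K_s$, which is precisely the stated decomposition $K=RK_1+R(\bigcup_{s\geq2}K_s)$.

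The computations here are routine; the genuine content, and the step I would be most careful about, is the reduction to binomials. This rests entirely on using the fine exponent-vector grading rather than the total degree: only with the full $\BN^n$-grading is it true that all monomials of a fixed multidegree have the same image under $\phi$, and it is this collapsing that makes $K$ binomial. I would also take care to verify that the non-decreasing sequences $I_s$ honestly index a monomial $\sk$-basis of the $T$-degree-$s$ part of $R$, so that no monomial of $R$ is double-counted in the spanning argument.
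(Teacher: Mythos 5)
The paper does not prove this statement at all---it is quoted from Taylor's thesis \cite{Ta} as a black box---so there is no internal proof to compare against. Your argument is correct and is the standard one: the fine $\BN^n\times\BN$-grading makes $\phi$ homogeneous with all monomials of a fixed multidegree $(\gamma,s)$ mapping to the single element $X^\gamma t^s$, which forces $K$ to be spanned by differences of such monomials, and the factorization $X^bT_\beta-X^aT_\alpha=\bigl(X^\gamma/\lcm[u_\alpha,u_\beta]\bigr)T_{\alpha,\beta}$ exhibits each difference as a monomial multiple of a generator $T_{\alpha,\beta}$. The two points you flag as needing care---that only the exponent-vector grading collapses each graded piece onto a one-dimensional image, and that the non-decreasing sequences in $I_s$ index a monomial basis of the $T$-degree-$s$ part without double counting---are exactly the right ones, and both check out; the $s=1$ case of your computation also recovers the classical fact that the Taylor relations generate the syzygy module of a monomial ideal, which is what identifies the linear part of your generating set with $K_1$ in the statement.
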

\begin{notation}{\rm
		Let $S=\sk[X_1,\dots, X_n]$  and $u$ be a squarefree monomial in $S$. Set $\X=X_1\cdots X_n$. We denote 
		the monomial $\dfrac{\X}{u}$  by $u'$.
}\end{notation}
\begin{remark}\label{lcmRem}{\rm
		Let $u,v$ be monomials in $S$. Then we have $\dfrac{\lcm[u',v']}{v'}=\dfrac{\lcm[u,v]}{u}$.
}\end{remark}

In the following proposition, we extend the result \cite[Theorem 3.1]{Vi} of Villarreal for any squarefree monomial ideal generated in degree $n-2$.
\begin{proposition}\label{ReesGenProp}
	Let $J=\langle u_1,\dots, u_r \rangle$ be a squarefree monomial ideal of  $S$ generated in degree $n-2$ and $K$ the defining ideal of the Rees algebra of $J$. Then $K=RK_1+R(\bigcup\limits_{s=2}^\infty P_s)$, where $P_s=\{T_\alpha-T_\beta:u_\alpha=u_\beta \text{ for some } \alpha,\beta\in J_s\}.$
\end{proposition}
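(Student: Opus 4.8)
The plan is to combine Taylor's description (Theorem \ref{ReesGenThm}) with the dual monomial $u'=\X/u$ and Remark \ref{lcmRem}, thereby reducing the statement to a connectivity assertion about factorizations of monomials inside the graph $G_J$, in the spirit of Villarreal's degree-two case \cite{Vi}.

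First I would reformulate all the data combinatorially. Write $e_i=[n]\setminus\supp(u_i)$, an edge of $G_J$, so that $u_i=\X/X_{e_i}$. For a sequence $\alpha=(i_1,\dots,i_s)\in J_s$ put $d_j(\alpha)=|\{k: j\in e_{i_k}\}|$. A direct exponent computation gives $u_\alpha=\prod_j X_j^{\,s-d_j(\alpha)}$, so $u_\alpha=u_\beta$ if and only if $d_j(\alpha)=d_j(\beta)$ for all $j$; thus $P_s$ records exactly the pairs with equal degree vectors. Using Remark \ref{lcmRem} I would then compute $\frac{\lcm[u_\alpha,u_\beta]}{u_\alpha}=\prod_j X_j^{\max(0,\,d_j(\alpha)-d_j(\beta))}$, and in particular identify the degree-one Taylor relations: for indices with $e_i=\{p,x\}$ and $e_j=\{p,y\}$ sharing the vertex $p$ one gets $T_{i,j}=X_yT_j-X_xT_i$, so modulo $RK_1$ there is an edge-replacement move $X_xT_i\equiv X_yT_j$ (with an analogous move for disjoint edges). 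Geometrically this move transports a ``spare variable'' along an edge of $G_J$.

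With this dictionary the inclusion $RK_1+R(\bigcup_{s\geq 2}P_s)\subseteq K$ is immediate, since $K_1\subseteq K$ and $\phi(T_\alpha-T_\beta)=(u_\alpha-u_\beta)t^s=0$ whenever $u_\alpha=u_\beta$. For the reverse inclusion I would invoke Theorem \ref{ReesGenThm}, which reduces the problem to showing $T_{\alpha,\beta}\in RK_1+R(\bigcup_{s\geq 2}P_s)$ for every $s$ and every $\alpha,\beta\in J_s$. Writing $w=\lcm[u_\alpha,u_\beta]$, both $\frac{w}{u_\alpha}T_\alpha$ and $\frac{w}{u_\beta}T_\beta$ are monomial representatives of $wt^s$ under $\phi$, and $T_{\alpha,\beta}$ is their difference; hence it suffices to prove that any two monomial representatives $X^aT_\gamma$ and $X^bT_\delta$ of the same element $wt^s$ (that is, $X^au_\gamma=X^bu_\delta=w$ with $|\gamma|=|\delta|=s$) are congruent modulo $RK_1+R(\bigcup_{s\geq 2}P_s)$. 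I would argue this by induction on the discrepancy $\mu(\gamma,\delta)=\deg\frac{\lcm[u_\gamma,u_\delta]}{u_\gamma}$: when $\mu=0$ one has $u_\gamma=u_\delta$, so $X^a=X^b$ and the difference is $X^a(T_\gamma-T_\delta)\in R\cdot P_s$; when $\mu>0$ I would locate a vertex over-covered by $\gamma$ and one under-covered, and use the edge-replacement moves to push a spare variable along a path of $G_J$ joining them, producing a new representative with strictly smaller discrepancy from $X^bT_\delta$.

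The main obstacle is precisely this last combinatorial step: guaranteeing that a sequence of edge-replacement moves inside $G_J$ connects the two factorizations while strictly decreasing $\mu$. This is the dual form of Villarreal's even-closed-walk analysis \cite{Vi}; the idea is that the symmetric difference of the two edge-multisets, corrected by the spare variables $X^a,X^b$, decomposes into closed alternating walks in $G_J$, along each of which the token moves can be performed one edge at a time. I expect the bookkeeping of which spare variable is available at each step, together with the verification that every move preserves the identity $X^{a'}u_{\gamma'}=w$, to be the delicate part; the remaining ingredients are just the exponent arithmetic set up in the first step.
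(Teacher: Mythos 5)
Your overall architecture (Taylor's generators from Theorem \ref{ReesGenThm}, the dual $u\mapsto u'=\X/u$ with Remark \ref{lcmRem}, the easy inclusion, and the observation that $\mu=0$ forces $u_\gamma=u_\delta$ and hence lands in $R\cdot P_s$) is sound and close in spirit to the paper's. The genuine gap is the inductive step for $\mu>0$: you assert that one can always find an edge-replacement move, i.e.\ an element of $K_1$, that transports a spare variable along a path of $G_J$ and strictly decreases the discrepancy $\mu(\gamma,\delta)$, but you do not prove this and you explicitly defer it as ``the delicate part.'' That step is not bookkeeping; it is the entire content of the proposition. Concretely, you must (i) show that an over-covered vertex and an under-covered vertex are joined by a walk inside the multigraph spanned by the edges occurring in $\gamma$ and $\delta$ together with the supports of $X^a$ and $X^b$ --- a path in $G_J$ at large is useless, since the move $X_xT_i\equiv X_yT_j$ can only be applied when $i$ actually occurs in $\gamma$ and $X_x$ actually divides the current coefficient; (ii) handle the relations coming from \emph{disjoint} edges, whose coefficients $X_{e_i\setminus e_j}$ have degree $2$, and verify they do not increase $\mu$; and (iii) rule out getting stuck in a cycle of moves. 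Until the walk-decomposition lemma you allude to is actually proved, the induction does not close.

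The paper's proof avoids this open-ended token-pushing by importing the precise combinatorial fact it needs from the proof of Villarreal's Theorem 3.1 \cite{Vi}: after dualizing, for $\alpha\neq\beta$ there exist indices $l,m$ and a monomial $v$ with $\lcm[u'_\alpha,u'_\beta]=u'_{i_l}\widehat{u}'_{\beta_m}v$, which yields an explicit two-term splitting $T_{\alpha,\beta}=AT_{j_m}+B\widehat{T}_{\alpha_l}$ with $A\in K_{s-1}$ and $B\in K_1$; descending induction on the $T$-degree $s$ then finishes, with the binomials $P_s$ absorbing exactly the cases where this reduction is unavailable. To repair your write-up, either prove the alternating-walk decomposition you invoke (the dual form of Villarreal's even-closed-walk analysis) or replace the discrepancy induction by this degree-lowering splitting, which is the shorter route.
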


\begin{proof} For $s\geq 2$,
	let $\alpha=(i_1,\dots, i_s)$ and $\beta=(j_1,\dots, j_s)\in J_s$ such that $\alpha\neq\beta$. From the proof of \cite[Theorem 3.1]{Vi}, it follows that there exist integers $l$ and $m$ and a monomial $v$ such that $\lcm[u'_\alpha, u'_\beta]=u'_{i_l}\widehat{u}'_{\beta_m}v$. This implies that $\dfrac{\lcm[u'_\alpha,u'_\beta ]}{\lcm[u'_{i_l},u'_{j_m}]\widehat{u'}_{\beta_m}}$ and $\dfrac{\lcm[u'_\alpha,u'_\beta]}{\lcm[\widehat{u'}_{\alpha_l},\widehat{u'}_{\beta_m}]u'_{i_l}}$ are monomials in $R$.
	By Theorem \ref{ReesGenThm}, we know that $K_s$ is generated by  polynomials of type 
	$$T_{\alpha,\beta}=\dfrac{\lcm[u_\alpha, u_\beta]}{u_\beta}T_\beta-\dfrac{\lcm[u_\alpha, u_\beta]}{u_\alpha}T_\alpha.$$
	By Remark \ref{lcmRem}, we know that $T_{\alpha,\beta}=\dfrac{\lcm[u'_\alpha, u'_\beta]}{u'_\alpha}T_\beta-\dfrac{\lcm[u'_\alpha, u'_\beta]}{u'_\beta}T_\alpha.$ Now, one can note that 
	$T_{\alpha,\beta}=AT_{j_m}+B\widehat{T}_{\alpha_l}$, where
	
	$$A=\dfrac{\lcm[u'_\alpha,u'_\beta ]} {\lcm[\widehat{u'}_{\alpha_l},\widehat{u'}_{\beta_m}]u'_{i_l}} \left(\dfrac{\lcm[\widehat{u'}_{\alpha_l},\widehat{u'}_{\beta_m}]}{\widehat{u'}_{\alpha_l}}\widehat{T}_{\beta_m}-\dfrac{\lcm[\widehat{u'}_{\alpha_l},\widehat{u'}_{\beta_m}]}{\widehat{u'}_{\beta_m}}\widehat{T}_{\alpha_l}\right)$$ and 
	$$B=\dfrac{\lcm[u'_\alpha,u'_\beta]} 
	{\lcm[u'_{i_l},u'_{j_m}]\widehat{u'}_{\beta_m}}\left(\dfrac{\lcm[u'_{i_l},u'_{j_m}]}{u'_{i_l}}T_{j_m}-\dfrac{\lcm[u'_{i_l},u'_{j_m}]}{u'_{j_m}}T_{i_l}\right).$$

	Note that $A\in K_{s-1}$ and $B\in K_1$, and hence we get $K_s\subset R_1K_{s-1}+R_{s-1}K_1$. The backward mathematical induction completes the proof.
\end{proof}

\begin{corollary}\label{linTypCor}
	Let $J$ be a squarefree monomial ideal generated in degree $n-2$ and $G_J$ be the associated graph. Then $J$ is of linear type if and only if $G_J$ is a forest or it has a unicycle with an odd length cycle. 
\end{corollary}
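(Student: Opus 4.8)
The plan is to read off linear type directly from Proposition \ref{ReesGenProp}, which gives $K=RK_1+R(\bigcup_{s\ge 2}P_s)$ with $P_s=\{T_\alpha-T_\beta : u_\alpha=u_\beta,\ \alpha\neq\beta\in J_s\}$. So $J$ is of linear type exactly when every binomial $T_\alpha-T_\beta$ coming from a coincidence $u_\alpha=u_\beta$ already lies in $RK_1$. First I would record that a nonzero pure $T$-binomial cannot be absorbed into $RK_1$: reducing modulo $(X_1,\dots,X_n)$ maps $R$ onto $\sk[T_1,\dots,T_r]$, and since $J$ is generated in the single degree $n-2$ no generator divides another, so each generator $T_{i,j}=\frac{\lcm[u_i,u_j]}{u_j}T_j-\frac{\lcm[u_i,u_j]}{u_i}T_i$ of $K_1$ has positive $X$-degree and dies in the quotient. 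Hence $RK_1\subseteq(X_1,\dots,X_n)R$, while $T_\alpha-T_\beta$ survives as the nonzero element $T_\alpha-T_\beta$ of $\sk[T]$. Therefore $J$ is of linear type if and only if $P_s=\emptyset$ for all $s\ge 2$, i.e. $u_\alpha=u_\beta$ forces $\alpha=\beta$.

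Next I would translate these coincidences into graph language. Writing $\X=X_1\cdots X_n$ and $u'=\X/u$, each generator $u_i$ of $J$ satisfies $u_i'=X_jX_k$ where $\{j,k\}=[n]\setminus\supp(u_i)$ is the corresponding edge of $G_J$; thus $\{u_1',\dots,u_r'\}$ is precisely the generating set of the edge ideal $I(G_J)$. Since $u_\alpha=\X^{s}/u'_\alpha$ for $\alpha\in J_s$, we get $u_\alpha=u_\beta\iff u'_\alpha=u'_\beta$, an equality of products of edge monomials. Comparing the exponent of each variable $X_v$ shows $u'_\alpha=u'_\beta$ is equivalent to $Na=Nb$, where $N$ is the (unsigned) vertex–edge incidence matrix of $G_J$ and $a,b\in\BN^{|E|}$ are the multiplicity vectors of $\alpha,\beta$; summing over all rows (each edge meets two vertices) gives $\sum_e a_e=\sum_e b_e$ automatically, so the common length is well defined. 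Splitting a nonzero $d\in\ker N$ into its positive and negative parts then produces $\alpha\neq\beta$ with $u'_\alpha=u'_\beta$, and one checks the resulting length is at least $2$ because $G_J$ is simple. Hence a nontrivial coincidence exists for some $s\ge 2$ if and only if $\ker N\neq 0$ (equivalently over $\BQ$ or $\BZ$).

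It remains the combinatorial step of identifying the graphs with $\ker N=0$. Over $\BQ$ the rank of the unsigned incidence matrix on $n$ vertices equals $n-b$, where $b$ is the number of bipartite connected components, so $\dim\ker N=\sum_{\text{components }i}\bigl(|E_i|-|V_i|+\epsilon_i\bigr)$ with $\epsilon_i=1$ when the $i$-th component is bipartite and $\epsilon_i=0$ otherwise. Each summand is nonnegative and vanishes precisely when the component is a tree (bipartite, $|E_i|=|V_i|-1$) or is unicyclic with its unique cycle odd (non-bipartite, $|E_i|=|V_i|$). Consequently $\ker N=0$ if and only if $G_J$ is a forest or, more generally, each connected component is a tree or unicyclic with an odd cycle, which I read as the meaning of the stated condition. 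Combining this with the first two paragraphs yields the equivalence claimed in the corollary.

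The main obstacle I expect is the rank computation for the unsigned incidence matrix together with its clean conversion into the cycle condition: one must use that an odd cycle renders a component non-bipartite (removing the $+1$ in the rank defect), whereas an even cycle or a second independent cycle forces $\ker N\neq 0$ and hence a genuine quadratic-or-higher generator of $K$. The reduction modulo $(X_1,\dots,X_n)$ in the first paragraph is the other delicate point, since it is exactly what upgrades ``$P_s\neq\emptyset$'' from a possibly redundant relation to an honest failure of linear type.
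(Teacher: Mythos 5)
Your proposal is correct, and its overall skeleton matches the paper's: reduce linear type to the absence of coincidences $u_\alpha=u_\beta$ via Proposition \ref{ReesGenProp}, pass to $u'_\alpha=u'_\beta$ (products of edge monomials of $G_J$), and then invoke the graph-theoretic characterization. Where you genuinely diverge is in the last step: the paper simply cites Villarreal's Corollary 3.2 for the fact that the edge monomials of a graph have distinct $s$-fold products for all $s$ exactly when every component is a tree or odd-unicyclic, whereas you reprove this from scratch via the rank formula $\operatorname{rank}(N)=n-b$ for the unsigned incidence matrix ($b$ the number of bipartite components), splitting an integer kernel vector into positive and negative parts and using simplicity of the graph to rule out length $1$. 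You also supply a justification the paper leaves implicit (``it can be seen that''): that a binomial $T_\alpha-T_\beta$ with $\alpha\neq\beta$ cannot lie in $RK_1$, which you get from $K_1\subseteq (X_1,\dots,X_n)R$ (no generator of $J$ divides another, all being of degree $n-2$) together with reduction modulo $(X_1,\dots,X_n)$. The trade-off is clear: the paper's proof is shorter but rests on an external citation and an unproved absorption claim; yours is self-contained and makes the equivalence ``linear type $\Leftrightarrow$ $P_s=\emptyset$ for all $s$'' airtight, at the cost of importing the standard (but nontrivial) rank computation for unsigned incidence matrices. Both arguments arrive at the same component-wise reading of the somewhat loosely phrased condition in the statement.
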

\begin{proof}
	From Proposition \ref{ReesGenProp}, it can be seen that $J$ is of linear type if and only if for any $s\geq 2$, we have $u_\alpha\neq u_\beta$ for all $\alpha, \beta\in J_s$ with $\alpha \neq \beta$. Note that $u_\alpha=u_\beta$ if and only if $u'_\alpha=u'_\beta$. It follows from \cite[Corollary 3.2]{Vi} $u'_{\alpha}\neq u'_{\beta}$ for all $\alpha,\beta\in J_s$ if and only if $G_J$ is a forest or it is a unicycle with an odd length cycle.
\end{proof}

In order to understand the number of generators of $K$, we need to compute the Betti numbers of $J$ and closed even walks of $G_J$. In the following, we compute the Betti numbers of $J$.

\begin{theorem}\label{BettiThm}
	Let $J\subset S$ be a squarefree monomial ideal generated in degree $n-2$ such that the associated graph $G_J$ is connected. Then $J$ has a linear quotient. Moreover, if $G_J$ has $r$ number of edges and $n$ number vertices, then
	$$\beta_{i,j}^S(J)=\begin{cases}
	r &\text{if}~ (i,j)=(0,2),\\
	2r-n&\text{if}~ (i,j)=(1,3),\\
	r-n+1& \text{if}~ (i,j)=(2,4),\\
	0 & \text{otherwise.}
	\end{cases}$$
\end{theorem}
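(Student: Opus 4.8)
The plan is to exhibit a linear-quotient ordering of $M(J)$ dictated by the combinatorics of $G_J$, and then read off the Betti numbers from Lemma \ref{resRem}. Writing each generator as $u=\X/X_{e}$ for the corresponding edge $e$ of $G_J$, the crucial computation is the colon $\langle u_{\ell}\rangle : u_{k}=X_{e_k\setminus e_\ell}$, which follows from $\gcd(u_\ell,u_k)=\X/X_{e_\ell\cup e_k}$. Thus an earlier edge $e_\ell$ sharing exactly one vertex with $e_k=\{c,d\}$ contributes a single variable ($X_d$ if they meet at $c$, and $X_c$ if they meet at $d$), whereas an earlier edge disjoint from $e_k$ contributes the degree-two monomial $X_{e_k}=X_cX_d$. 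This dichotomy is the engine of the whole argument.

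First I would fix a \emph{connected ordering} $e_1,\dots,e_r$ of the edges of $G_J$, that is, one in which every $e_k$ with $k\ge 2$ meets some earlier edge; such an ordering exists precisely because $G_J$ is connected (build up a connected subgraph one edge at a time). For this ordering each $e_k$ with $k\ge2$ shares a vertex with an earlier edge, so at least one of $X_c,X_d$ lies in $\langle u_1,\dots,u_{k-1}\rangle:u_k$; consequently every degree-two generator $X_cX_d$ coming from a disjoint earlier edge is redundant, and the colon ideal is generated by variables. This establishes that $J$ has linear quotients, and simultaneously pins down $\set(u_k)\subseteq\{X_c,X_d\}$: namely $X_d\in\set(u_k)$ iff some earlier edge is incident to $c$, and $X_c\in\set(u_k)$ iff some earlier edge is incident to $d$. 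In particular $|\set(u_k)|\le 2$ for all $k$, with $|\set(u_1)|=0$ and $|\set(u_k)|\ge1$ for $k\ge2$.

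Because all generators have degree $n-2$, linear quotients force a linear resolution, so $\beta^S_{i,j}(J)$ is nonzero only for $j=(n-2)+i$; by Lemma \ref{resRem} its value there is $\sum_{u\in M(J)}\binom{|\set(u)|}{i}$, which reduces everything to counting. The value $\beta_0=r$ is immediate. For $\beta_1=\sum_k|\set(u_k)|$ I would reorganize the sum over vertices: a vertex $v$ contributes $1$ to $|\set(u_k)|$ for every incident edge $e_k$ except the first one in the ordering, giving $\sum_v(\deg(v)-1)=2r-n$. Finally $\beta_2=\#\{k:|\set(u_k)|=2\}$; writing $a_t=\#\{k:|\set(u_k)|=t\}$ we have $a_0+a_1+a_2=r$ and $a_1+2a_2=2r-n$, and the connected ordering gives $a_0=1$, whence $a_2=r-n+1$. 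Vanishing of $\beta_i$ for $i\ge3$ is forced by $|\set(u_k)|\le2$.

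The main obstacle is the first step: proving cleanly that each colon ideal is generated by variables, i.e.\ that the degree-two monomials $X_{e_k}$ arising from disjoint earlier edges are always absorbed by an already-present variable. This is exactly the point where connectivity of $G_J$ is indispensable, and it must be argued carefully, together with the precise description of $\set(u_k)$, so that the subsequent counting (especially the identity $a_0=1$ that separates the $\beta_1$ and $\beta_2$ computations) is rigorous rather than heuristic.
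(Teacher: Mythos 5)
Your proof is correct, and its engine --- ordering the edges of $G_J$ so that each edge after the first meets an earlier one, computing $\langle u_\ell\rangle : u_k = X_{e_k\setminus e_\ell}$, and feeding the resulting $\set(u_k)$ into Lemma \ref{resRem} --- is the same as the paper's. You differ in two places. First, the paper obtains linear quotients abstractly, by viewing $G_J$ as a shellable one-dimensional simplicial complex and passing to the Alexander dual, before writing down an explicit order; you prove it directly from the colon computation, observing that the degree-two generators $X_cX_d$ coming from disjoint earlier edges are absorbed by the variables contributed by intersecting earlier edges. Second, the paper's ordering begins with a spanning tree built up edge by edge, which makes the census immediate: the first edge has $|\set|=0$, the remaining $n-2$ tree edges have $|\set|=1$, and the $r-n+1$ non-tree edges have $|\set|=2$. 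You instead work with an arbitrary connected ordering, compute $\beta_1=\sum_v(\deg(v)-1)=2r-n$ by double counting over vertices, and recover $a_2=r-n+1$ from the linear system together with $a_0=1$. Both routes are valid; yours is more self-contained (no appeal to shellability or Alexander duality), while the paper's spanning-tree order yields the exact distribution of $|\set|$-values rather than only the aggregate counts. One small remark: you place the nonvanishing graded Betti numbers at $j=(n-2)+i$, which is what the degrees of the generators actually force; the theorem's stated positions $j=2+i$ reflect a shifted degree convention, so your total Betti numbers $\beta_i$ agree with the paper's even though the internal degrees are recorded differently.
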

\begin{proof}
	Think $G_J$ as a $1$-dimensional simplicial complex, say $\Delta(G_J)$. Since connected $1$-dimensional simplicial complex is always shellable, so is $\Delta(G_J)$. Therefore, $I(\Delta(G_J)^\vee)$ has a linear quotient, where $\Delta(G_J)^\vee$ is the Alexander dual of $\Delta(G_J)$. Thus, result follows from the fact that $I(\Delta(G_J)^\vee)=J$. 
	
	Let $H$ be a spanning tree of $G_J$. Set $e_1$ be the edge incident to a leaf vertex. Since $H$ is connected graph, there exists an edge, say $e$, with $e\cap e_1\neq \phi$. Set $e=e_2$. In the similar manner, for  $2\leq i\leq n-1$, set $e_i$ to be an edge of $H$ such that $e_i\cap e_j\neq \phi$ for some $j<i$. Since the graph $H$ has no isolated vertices,  $\cup_{i=1}^{n-1}e_i\cap e=e$, where $e$ belongs to the $E(G_J) \setminus E(H)$ . Therefore, $e_1,\dots, e_r$, where $e_1,\dots ,e_{n-1}$ are edges of $H$, gives a shelling on $\Delta(G_J)$.  Now, ordering  $\{X'_{e_1},\dots, X'_{e_r}\}$ gives the linear order on generators of $J$. 
	
	In order to find $\set(X'_{e_j})$, we need to understand $X'_{e_j}:X'_{e_i}$. It is easy to see that $X'_{e_j}:X'_{e_i}=X'_{e_j}/\gcd(X'_{e_j},X'_{e_i})=\lcm(X_{e_j},X_{e_i})/X_{e_j}.$ Thus, $X'_{e_j}:X'_{e_i}$ is either $X_{e_i}$ or a variable whose index is a vertex in $e_j$ but not in $e_i$. For $2\leq i\leq n-1$, let $H'$ be the subgraph of $H$ with edge set $\{e_1,\dots, e_i\}$. Then note that one of a vertex of $e_i$ in $H'$ is a leaf vertex, say $i_1$. This implies that $\cup_{k=1}^{i-1}e_k\cap e_i=\{i_1\}$. Thus, for $2\leq i\leq n-1$, we have $|\set(X_{e_i}')|=1$. Now, for $n\leq i\leq r$, both vertices of the edge $e_i$ belong to different edges of $H$, and hence $|\set(X_{e_i}')|=2$. Therefore, using Lemma \ref{resRem}, we get $\beta_{0,2}^S(J)=r$, $\beta_{1,3}^S(J)=n-2+2(r-n+1)=2r-n$ and $\beta_{2,4}^S(J)=r-n+1$. 
\end{proof}

From the above theorem and Euler's formula for planar graph, we get the following.

\begin{corollary}
	Let $J\subset S$ be a squarefree monomial ideal generated in degree $n-2$ such that the associated graph $G_J$ is a connected planar graph. If $G_J$ has  $m$ number of bounded regions, then
	$$\beta_{i,j}^S(J)=\begin{cases}

	n+m-1 &\text{if}~ (i,j)=(0,2),\\
	n+2m-2 &\text{if}~ (i,j)=(1,3),\\
	m& \text{if}~ (i,j)=(2,4),\\
	0 & \text{otherwise.}
	\end{cases}$$
\end{corollary}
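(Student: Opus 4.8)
The plan is to derive this corollary directly from Theorem \ref{BettiThm} by translating the edge count $r$ into the combinatorial data of a connected planar graph via Euler's formula. The Betti numbers in Theorem \ref{BettiThm} are all expressed in terms of $n$ (the number of vertices) and $r$ (the number of edges) of $G_J$; since $G_J$ is assumed connected in both statements, Theorem \ref{BettiThm} applies verbatim, and the only task is to rewrite $r$ in the hypotheses of the corollary.

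First I would invoke Euler's formula for a connected planar graph. For a connected plane graph, Euler's relation reads $V - E + F = 2$, where $V = n$ is the number of vertices, $E = r$ is the number of edges, and $F$ is the number of faces including the single unbounded region. If $m$ denotes the number of \emph{bounded} regions, then $F = m + 1$, so Euler's formula gives $n - r + (m+1) = 2$, that is, $r = n + m - 1$. This is the key substitution.

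Next I would substitute $r = n + m - 1$ into each of the three nonzero Betti numbers from Theorem \ref{BettiThm}. For $(i,j) = (0,2)$ we get $\beta = r = n + m - 1$. For $(i,j) = (1,3)$ we get $\beta = 2r - n = 2(n+m-1) - n = n + 2m - 2$. For $(i,j) = (2,4)$ we get $\beta = r - n + 1 = (n + m - 1) - n + 1 = m$. These match the three cases claimed in the corollary, and all other Betti numbers vanish by Theorem \ref{BettiThm}.

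There is essentially no obstacle here, since this is a purely arithmetic consequence of a previously established theorem combined with a classical formula; the entire content is the observation that the number of independent cycles $r - n + 1$ of a connected planar graph equals its number of bounded faces $m$. The only point requiring a word of care is confirming that the planarity hypothesis is used solely to legitimize Euler's formula, and that connectedness is exactly the hypothesis already needed for Theorem \ref{BettiThm} to apply; no new resolution-theoretic work is required.
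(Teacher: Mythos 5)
Your proposal is correct and follows exactly the paper's own route: the paper derives the corollary from Theorem \ref{BettiThm} together with Euler's formula $n - r + (m+1) = 2$, i.e.\ $r = n+m-1$, and your substitutions reproduce all three nonzero Betti numbers. Nothing further is needed.
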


\begin{lemma}\label{comIntReesLem}
	Let  $J$ be a squarefree monomial ideal generated in degree $n-2$. If $G_J$ is a tree (unicyclic graph with an odd length cycle), then $\mathcal{R}(J)$ is a quadratic complete intersection (almost complete intersection). In particular, when $G_J$ is a tree, $J^s$ has a linear resolution for $s\geq 1$.
\end{lemma}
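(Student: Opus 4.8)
The plan is to read off the defining ideal $K$ of $\mathcal{R}(J)$ from the linear-type criterion, count its minimal generators using the Betti numbers of $J$, and then compare this count with $\hgt(K)$ obtained from a dimension calculation. Since a tree is a connected forest and a unicyclic graph with an odd cycle is precisely the remaining case, Corollary \ref{linTypCor} tells us that $J$ is of linear type in both situations; thus $K = RK_1$, so $K$ is generated by the linear syzygies of $J$. Because a tree and a unicyclic graph are connected, Theorem \ref{BettiThm} applies and shows that $J$ has a linear resolution; in particular its first syzygies are linear, so every minimal generator of $K$ has the shape $X_a T_i - X_b T_j$ with $X_a u_i = X_b u_j = \lcm(u_i, u_j)$, a quadric. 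Moreover, under linear type a minimal generating set of $K$ corresponds to a minimal generating set of the first syzygy module of $J$, whence $\mu(K) = \beta_1^S(J) = \beta_{1,3}^S(J) = 2r - n$.

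Next I would compute $\hgt(K)$. As $S$ is a domain and $J \neq 0$, we have $\dim \mathcal{R}(J) = n+1$, while $R = S[T_1,\ldots,T_r]$ has dimension $n+r$; hence $\hgt(K) = (n+r)-(n+1) = r-1$. If $G_J$ is a tree then $r = n-1$, so $\mu(K) = 2(n-1)-n = n-2 = r-1 = \hgt(K)$. Since $R$ is Cohen--Macaulay, an ideal generated by $\hgt(K)$ elements is generated by a regular sequence, so $\mathcal{R}(J) = R/K$ is a complete intersection, and by the previous paragraph it is quadratic. If $G_J$ is unicyclic with an odd cycle then $r = n$, so $\mu(K) = 2n-n = n = (r-1)+1 = \hgt(K)+1$, and therefore $\mathcal{R}(J)$ is an almost complete intersection.

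For the linear resolution when $G_J$ is a tree, I would invoke the consequence of R\"omer's theorem recalled in the introduction: it suffices to show $x\text{-reg}(\mathcal{R}(J)) = 0$. Bigrade $R$ by $\deg X_i = (1,0)$ and $\deg T_j = (n-2,1)$, so that $\phi$ is bihomogeneous and each defining equation $X_a T_i - X_b T_j$ has bidegree $(n-1,1)$. Since $K$ is a complete intersection on these $n-2$ forms, the Koszul complex on them is the minimal bigraded free resolution of $\mathcal{R}(J)$ over $R$, with $i$-th free module a sum of copies of $R(-i(n-1),-i)$. Subtracting the expected $x$-degree $i(n-2)$ and the homological degree $i$ gives $i(n-1) - i(n-2) - i = 0$ at every step, so $x\text{-reg}(\mathcal{R}(J)) = 0$ and, by \cite{Ro}, every power $J^s$ has a linear resolution.

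The step needing the most care is the identification $\mu(K) = \beta_{1,3}^S(J) = 2r - n$: one must justify that, for an ideal of linear type, the minimal defining equations of $\mathcal{R}(J)$ are in bijection with a minimal generating set of the first syzygy module of $J$, and that these equations are genuinely quadratic, which uses the linearity of the resolution of $J$ from Theorem \ref{BettiThm}. Once this and the clean height count $\hgt(K) = r-1$ are in hand, the complete-intersection versus almost-complete-intersection dichotomy reduces to comparing $2r-n$ with $r-1$ in the two cases. The $x$-regularity computation is then routine given the explicit Koszul resolution, the only subtlety being the correct choice of bigrading so that the defining equations have bidegree $(n-1,1)$.
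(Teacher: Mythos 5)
Your argument is correct and follows essentially the same route as the paper: linear type via Corollary \ref{linTypCor}, identification of $\mu(K)$ with $\beta_{1,3}^S(J)$ from Theorem \ref{BettiThm}, comparison with $\hgt(K)=\dim R-\dim\mathcal{R}(J)$ to get the (almost) complete intersection conclusion, and vanishing of the $x$-regularity to deduce linear resolutions of all powers. The only differences are presentational: you spell out the unicyclic count and the Koszul-complex computation of $\reg_x$ (with a nonstandard but harmless choice of bigrading), where the paper simply cites \cite[Proposition 10.1.16]{HH}.
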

\begin{proof}
	Let $R=S[T_1,\dots, T_{n-1}]$ and $\mathcal{R}(J)\simeq R/K$. By Corollary \ref{linTypCor}, we know that $J$ is of linear type, and hence $K$ is generated by $\beta_1^S(J)=n-2$ elements. Note that $\dim(R)=2n-1$ and $\dim(\mathcal{R}(J))=n+1$. This implies that $\hgt(K)=n-2=\mu(K)$ which forces that $K$ is generated by a regular sequence, and hence $\mathcal{R}(J)$ is a complete intersection.
	
	Further, $J$ is linearly presented and of linear type implies that $K$ is generated in degree $(1,1)$. Since $K$ is generated by a regular sequence of degree $(1,1)$, we get that $\reg_x(\mathcal{R}(J))=0$. Now, \cite[Proposition 10.1.16]{HH} completes the proof.
\end{proof}

\begin{lemma}\label{hilbLem}
	Let $T_1$ and $T_2$ be trees on $n$ vertices and $J_1$ and $J_2$ be cover ideals of their complement graphs, respectively. 
	Then $H_{J_1^s}(z)=H_{J_2^s}(z)$ for all $s\geq 1$. 
\end{lemma}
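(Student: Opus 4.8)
We have two trees $T_1, T_2$ on $n$ vertices. Their complement graphs $T_1^c, T_2^c$ have cover ideals $J_1, J_2$. We want to show the Hilbert series of $J_1^s$ and $J_2^s$ agree for all $s \geq 1$.

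**Key connection.** The cover ideal of $G^c$ should relate to the associated-graph construction $G_J$. Let me think about this.

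For the complement graph $T^c$, minimal cover sets correspond to... The cover ideal $J(T^c)$ is generated by monomials $\prod_{i \in C} X_i$ where $C$ is a minimal vertex cover of $T^c$. By Alexander duality, $J(G) = I(G)^\vee$, and the generators of $J(G)$ correspond to the minimal vertex covers, equivalently the complements of maximal independent sets.

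A maximal independent set of $T^c$ = a maximal clique of $T$. Since $T$ is a tree (triangle-free, in fact bipartite), the maximal cliques of $T$ are its edges (and isolated vertices, but a tree has none if $n \geq 2$). So the maximal independent sets of $T^c$ are exactly the edges $\{i,j\}$ of $T$. Thus the minimal vertex covers of $T^c$ are the complements $[n] \setminus \{i,j\}$ for edges $\{i,j\} \in T$.

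So $J(T^c) = \langle X_{[n] \setminus \{i,j\}} : \{i,j\} \in E(T) \rangle$. Each generator has degree $n-2$! This is a squarefree monomial ideal generated in degree $n-2$.

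Now the associated graph $G_J$: for $u \in M(J)$, we look at $[n] \setminus \supp(u) = \{i,j\}$ and make that an edge. So $G_{J(T^c)}$ has edges exactly $\{i,j\}$ where $X_{[n]\setminus\{i,j\}}$ is a generator, i.e., where $\{i,j\} \in E(T)$. **So $G_{J(T^c)} = T$ itself!**

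This is the key insight. The construction $J \mapsto G_J$ recovers $T$ from $J(T^c)$.

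**Now using the results.** $G_{J_1} = T_1$ and $G_{J_2} = T_2$ are both trees. By Lemma \ref{comIntReesLem}, when $G_J$ is a tree, $\mathcal{R}(J)$ is a quadratic complete intersection, and $J^s$ has a linear resolution for all $s \geq 1$.

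**Hilbert series approach.** The Hilbert series of $J^s$ (or of $S/J^s$) can be computed from the minimal free resolution / Betti numbers. If $J^s$ has a linear resolution, the graded Betti numbers $\beta_{i,i+d}$ with $d$ fixed are determined by the Hilbert function in a rigid way.

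Actually, the cleanest path: since $\mathcal{R}(J)$ is a quadratic complete intersection, its bigraded Hilbert series is completely determined by the degrees of the complete intersection generators. For a complete intersection of $n-2$ elements each of bidegree $(1,1)$ in $R = S[T_1,\dots,T_{n-1}]$, the bigraded Hilbert series is

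$$H_{\mathcal{R}(J)}(x,t) = \frac{(1 - xt)^{n-2}}{(1-x)^n (1-t)^{n-1}}.$$

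This depends only on $n$, not on which tree! The Hilbert series of $J^s$ is the coefficient of $t^s$ (shifted appropriately), so extracting it gives a series depending only on $n$.

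**Proof plan.**

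First, I identify the generators of $J_k = J(T_k^c)$: since $T_k$ is a tree, hence triangle-free, the maximal independent sets of $T_k^c$ are exactly the edges of $T_k$, so the minimal vertex covers of $T_k^c$ are $[n] \setminus e$ for $e \in E(T_k)$. Hence $J_k$ is squarefree generated in degree $n-2$, with $n-1$ generators (one per edge of the tree).

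Second, I observe that the associated graph $G_{J_k}$ (from the Section 3 construction) equals $T_k$, since $[n] \setminus \supp(X_{[n]\setminus e}) = e$ ranges over edges of $T_k$. In particular $G_{J_k}$ is a tree.

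Third, by Lemma \ref{comIntReesLem}, $\mathcal{R}(J_k) \simeq R_k/K_k$ where $K_k$ is generated by a regular sequence of $n-2$ elements of bidegree $(1,1)$, with $R_k = S[T_1, \dots, T_{n-1}]$. The bigraded Hilbert series of $R_k/K_k$ is therefore
$$H_{\mathcal{R}(J_k)}(x,t) = \frac{(1-xt)^{n-2}}{(1-x)^n (1-t)^{n-1}},$$
which is independent of $k$.

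Fourth, the Hilbert series $H_{J_k^s}(z)$ is obtained by specializing: $H_{J_k^s}(z) = \sum_j (\dim_{\sf k} (I_k^s)_j) z^j$, and $(I_k^s)_j$ is the $(x^j, t^s)$-bigraded piece of $\mathcal{R}(J_k)$. Since $H_{\mathcal{R}(J_1)} = H_{\mathcal{R}(J_2)}$ as bigraded series, their $t^s$-components agree as $z$-series. Hence $H_{J_1^s}(z) = H_{J_2^s}(z)$ for all $s \geq 1$.

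The main obstacle — and the step needing the most care — is verifying that the complete intersection generators all have the same bidegree $(1,1)$ and that the bigraded Hilbert series of a bigraded complete intersection factors as claimed. Lemma \ref{comIntReesLem} already records that $K_k$ is generated by a regular sequence in bidegree $(1,1)$ (via linear type and linear presentation), so this reduces to the standard computation that for a regular sequence of forms of bidegree $(1,1)$ in a bigraded polynomial ring, each Koszul relation contributes a factor $(1-xt)$ to the numerator. I would spell out that $\mathcal{R}(J_k)$ is resolved by the Koszul complex on $K_k$, whose bigraded Euler characteristic yields the stated rational function.
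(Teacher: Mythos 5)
Your proposal is correct and follows essentially the same route as the paper: the paper's proof also extracts $H_{J^s}(z)$ from the bigraded Hilbert series of $\mathcal{R}(J)$ (via the coefficient of $z_2^s$) and then invokes the complete-intersection structure from Lemma \ref{comIntReesLem} to see that this bigraded series depends only on $n$. Your write-up is in fact more explicit than the paper's, since you verify that $G_{J(T^c)}=T$ and record the rational function $\frac{(1-z_1z_2)^{n-2}}{(1-z_1)^n(1-z_2)^{n-1}}$, both of which the paper leaves implicit.
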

\begin{proof}
	Note that if $H_{\mathcal{R}(I)}(z_1,z_2)$ is the bigraded Hilbert series of the Rees algebra of a monomial ideal $I$, then $$H_{I^s}(z_1)=\left.\dfrac{1}{s!}\dfrac{\partial^s}{\partial z_2^s} \left(  H_{\mathcal{R}(I)} (z_1,z_2)\right)\right\vert_{z_2=0}.$$ Thus it is enough to prove that $H_{\mathcal{R}(J_1)}(z_1,z_2)=H_{\mathcal{R}(J_2)}(z_1,z_2)$. This follows from the proof of Lemma \ref{comIntReesLem}.
\end{proof}

\begin{corollary}\label{sameBettiCor}
	Let $J_1$ and $J_2$ be as in Lemma \emph {\ref{hilbLem}}. Then $\beta^S(J_1^s)=\beta^S(J_2^s)$ for all $s\geq 1$.
\end{corollary}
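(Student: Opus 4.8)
The plan is to reduce the equality of Betti numbers to the already-established equality of Hilbert series via a rigidity argument. The key observation is that Corollary \ref{sameBettiCor} concerns powers $J_1^s$ and $J_2^s$, and by Lemma \ref{comIntReesLem} the complement of a tree yields a cover ideal $J$ whose associated graph $G_J$ is a tree (since the cover ideal of $G^c$ is generated in degree $n-2$ precisely when the edge set corresponds to the tree edges), so $J^s$ has a \emph{linear resolution} for all $s\geq 1$. This is the crucial structural input: once a module has a linear resolution, its graded Betti numbers are completely determined by its Hilbert series.

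First I would record that for a module $M$ with a linear resolution generated in a single degree $d$, every nonzero graded Betti number $\beta_{i,j}^S(M)$ occurs only in the strand $j=d+i$. Consequently the whole Betti table collapses onto one diagonal, and the alternating sum that produces the Hilbert series has no cancellation between distinct $(i,j)$ contributing to the same total degree beyond that single strand. More precisely, writing the Hilbert series of $M$ as $H_M(z)=\frac{\sum_{i}(-1)^i\sum_j \beta_{i,j}^S(M)\,z^j}{(1-z)^n}$, the linearity forces the numerator to be $\sum_i(-1)^i\beta_{i,d+i}^S(M)\,z^{d+i}$, so each coefficient of the numerator polynomial equals $\pm$ a single Betti number. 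Hence $H_M(z)$ determines each $\beta_{i,d+i}^S(M)$, and all other Betti numbers vanish.

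Then I would apply this to $M=J_1^s$ and $M=J_2^s$. Both are generated in the same single degree (namely $s(n-2)$, since each $J_k$ is generated in degree $n-2$), both have linear resolutions by Lemma \ref{comIntReesLem}, and by Lemma \ref{hilbLem} they share the same Hilbert series $H_{J_1^s}(z)=H_{J_2^s}(z)$. By the rigidity just described, the Betti numbers are read off as the (signed) coefficients of the common numerator polynomial, and therefore $\beta_{i,j}^S(J_1^s)=\beta_{i,j}^S(J_2^s)$ for all $i,j$, which is exactly $\beta^S(J_1^s)=\beta^S(J_2^s)$.

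The main obstacle, and the point that must be handled carefully, is justifying that the cover ideal of the complement of a tree does in fact fall under the hypotheses of Lemma \ref{comIntReesLem}, i.e.\ that it is a squarefree monomial ideal generated in degree $n-2$ with $G_J$ a tree. A tree on $n$ vertices has $n-1$ edges, and the complement graph's cover ideal should be generated by the monomials $X'_e$ for tree edges $e$, each of degree $n-2$, with $G_J$ recovering the original tree; I would verify this identification explicitly so that the linear-resolution conclusion genuinely applies. Beyond that, the argument is a clean consequence of the Hilbert-series-determines-Betti-numbers principle for linearly resolved modules, so no further heavy computation is required.
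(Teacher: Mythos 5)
Your proposal is correct and follows essentially the same route as the paper: the paper's (very terse) proof likewise rests on the powers having linear resolutions together with the Hilbert series equality of Lemma \ref{hilbLem}, so that the Betti numbers are read off from the common numerator of the Hilbert series. Your version usefully makes explicit both the rigidity argument and the identification of the cover ideal of $T^c$ with a degree-$(n-2)$ squarefree ideal whose associated graph is $T$, and it cites Lemma \ref{comIntReesLem} for linearity of all powers, which is the more accurate reference for $s\geq 2$ than the paper's citation of Theorem \ref{BettiThm}.
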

\begin{proof}
	The proof of the corollary follows from Theorem \ref{BettiThm} and Lemma \ref{hilbLem}.
\end{proof}

The above corollary says that if $G$ is the complement graph of a tree, then the Betti numbers of powers of cover ideal do not depend  on tree. However, if $G$ is the complement of a unicyclic graph, then above result does not hold. For example, let $G$ be the complement graph of a cycle $C_7$ and $H$ be the complement of $\{\{1,2\},\{2,3\},\{3,4\},\{4,5\},\{5,1\},\{5,6\},\{6,7\}\}$. Then using Macaulay2  \cite{M2}, we see that $$196=\beta^S_{2,16}\left(\frac{S}{J(G)^3}\right)\ne \beta^S_{2,16}\left(\frac{S}{J(H)^3}\right)=195.$$

\section{Betti Numbers of Powers of Cover Ideals}
\noindent Let $K_n$ be a complete graph on $n$ vertices and $J(K_n)$ be its cover ideal. In this section our goal is to compute the Betti numbers of $J(K_n)^s$ for $s \geq 1$ which proves \cite[Conjecture 4.10]{AN}. As a consequence, we find the Betti numbers of powers of cover ideal of the complement of a tree. Now for $s\geq 1$, observe that 
\begin{align*}
M(J(K_n)^s) =\left\{ \frac{\mathbf{X}^s}{w}: \mathbf{X}=\prod_{i=1}^n X_i, w \in \Mon(S), \deg(w)=s \right\}. 
\end{align*}

The following result is a special case of Theorem 4.2. in \cite{KS}. 
\begin{lemma}\label{linQuoLem}  For $s\geq 1$, the ideal $J(K_n)^s$ has linear quotients with respect to the reverse lexicographic order of the generators.
\end{lemma}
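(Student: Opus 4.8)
The plan is to analyze the generators
\[
M(J(K_n)^s)=\left\{\frac{\mathbf{X}^s}{w}: w\in\Mon(S),\ \deg(w)=s\right\}
\]
directly, since each generator is determined by the exponent vector of the ``missing'' monomial $w$ of degree $s$. Writing $w=X_1^{a_1}\cdots X_n^{a_n}$ with $\sum a_i=s$ and $0\le a_i\le s$, the generator is $u_w=\prod_i X_i^{s-a_i}$, so the generators are in bijection with the lattice points $(a_1,\dots,a_n)$ on the simplex $\sum a_i=s$, $a_i\ge 0$. All generators have the same total degree $s(n-1)$, so the reverse lexicographic order is a well-defined total order on $M(J(K_n)^s)$. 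First I would fix this reverse lexicographic order and, given any generator $u$, compute the colon ideal $\langle u_1,\dots,u_{j-1}\rangle : u_j$ explicitly in terms of the exponent data.

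The key computational step is to identify $\set(u_j)$ combinatorially. For two generators $u=u_w$ and $v=u_{w'}$ with exponent vectors $a=(a_i)$ and $a'=(a'_i)$ of $w,w'$, one has
\[
u:v=\frac{u}{\gcd(u,v)}=\prod_{i}X_i^{\max(a'_i-a_i,0)},
\]
so $X_k\in (u:v)$ precisely when $a'_k>a_k$, i.e.\ when $v$ has a strictly larger exponent of $X_k$ in the missing monomial $w'$. The next step is to show that for each $u_j$ the variable $X_k$ lies in $\set(u_j)$ if and only if there is an earlier generator $v$ in the reverse lex order with $a'_k>a_k$ and with the single-variable adjustment still landing on a generator of the ideal; this reduces to checking that whenever some $a_k<s$ one can decrease $a_k$ by one and increase some coordinate $a_\ell$ with $a_\ell>0$ by one to produce an exponent vector that is reverse-lex earlier. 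The heart of the argument is the verification that the resulting generator $v=v(k,\ell)$ really does precede $u_j$ in the order and that $u_j:v$ is generated by the single variable $X_k$, which makes the quotient $\langle u_1,\dots,u_{j-1}\rangle:u_j$ a monomial prime generated by variables, hence linear.

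I expect the main obstacle to be the careful bookkeeping that pins down exactly which reverse-lex-earlier neighbors $v$ of $u_j$ contribute a variable to the colon ideal, and proving that every generator of $\langle u_1,\dots,u_{j-1}\rangle:u_j$ is a single variable rather than a higher-degree monomial. Concretely, one must rule out that some ``diagonal'' move (changing two or more coordinates of the exponent vector at once) yields a colon generator that is not already obtained from a single-coordinate move; the standard way to handle this is to show that any earlier generator $v$ with $u_j:v$ of degree $\ge 2$ can be replaced by an intermediate generator $v'$, still reverse-lex earlier than $u_j$, with $u_j:v'$ dividing $u_j:v$ and of degree $1$. Because the excerpt notes this is a special case of \cite[Theorem 4.2]{KS}, the cleanest route may be simply to invoke that theorem after matching notation, but if a self-contained argument is wanted, the exponent-vector analysis above, combined with the observation that reverse lex refines the partial order by coordinatewise domination of the missing monomials $w$, supplies it.
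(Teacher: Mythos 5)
Your strategy is correct but takes a genuinely different route from the paper, which gives no argument at all for this lemma and simply cites \cite[Theorem 4.2]{KS}; a self-contained exponent-vector proof is therefore worth having, and yours does go through once the bookkeeping in your middle paragraph is repaired. Write $u_j=\mathbf{X}^s/w$ where $w$ has exponent vector $c$, and let $u_i$ ($i<j$) have missing exponent vector $c'$. Then $u_i:u_j=\prod_k X_k^{\max(c_k-c'_k,0)}$, and $u_i>_{\mathrm{revlex}}u_j$ says exactly that $c_p<c'_p$ at the largest index $p$ where $c$ and $c'$ differ; since $\sum_k c_k=\sum_k c'_k=s$, some $k<p$ has $c_k>c'_k\ge 0$, hence $c_k\ge 1$ and $X_k$ divides $u_i:u_j$. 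The correct single-coordinate move is $b=c-e_k+e_\ell$ with $\ell>k$ (for instance $\ell=n$, admissible because $k<p\le n$): the conditions you impose --- ``$a_k<s$'' in order to decrease $a_k$ and ``$a_\ell>0$'' in order to increase $a_\ell$ --- are the wrong ones, since decreasing needs $c_k\ge1$ (which holds automatically for the relevant $k$), increasing needs nothing, and taken literally the requirement $a_\ell>0$ fails for every $\ell>k$ when $c$ is concentrated in the single coordinate $k$. What actually matters, and what you flag as the heart of the argument but do not pin down, is the direction of the index shift: the generator $v$ with missing vector $b$ satisfies $v>_{\mathrm{revlex}}u_j$ precisely because the largest index where $b$ and $c$ differ is $\ell$ and $b_\ell=c_\ell+1>c_\ell$, whereas a shift with $\ell<k$ would land \emph{after} $u_j$; moreover $v:u_j=X_k$ divides $u_i:u_j$, which is exactly the reduction of your ``diagonal moves'' to single-coordinate moves, and it simultaneously recovers the paper's Lemma \ref{LemSet} ($\set(u_j)=\{X_k:k<n,\ c_k\ge1\}$). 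So the approach is viable and more elementary than the paper's citation, but the inequalities governing which coordinate may be lowered, which raised, and the constraint $\ell>k$ are currently stated backwards and must be fixed as above before the sketch becomes a proof.
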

	 
Let  $M(J(K_n)^s)=\{u_1,\ldots,u_r\}$, where $u_1>u_2>\cdots>u_r$ in the reverse 
lexicographical order with respect to $X_1 > X_2 > \cdots> X_r$. Then for $2\leq j \leq r$, we compute the $\set(u_j)$, which will be useful in proving Theorem \ref{comBettiThm}. 

\begin{lemma}\label{LemSet}
	 For $2\leq j \leq r$, let $u_j=\dfrac{\mathbf{X}^s}{v}$, where $v\in \Mon(S)$, $\deg(v)=s$. Then $\set(u_j)=\{X_i:i<n ~and~ i \in \supp(v)\}$. 
\end{lemma}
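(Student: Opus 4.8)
The plan is to prove Lemma \ref{LemSet} directly from the definition of $\set(u_j)$, using the explicit description of the generators of $J(K_n)^s$ and the structure of the reverse lexicographic order. Recall that $\set(u_j)=\{X_k : X_k \in I_j : u_j\}$, where $I_j = \langle u_1,\dots,u_{j-1}\rangle$; so $X_k \in \set(u_j)$ precisely when there is some $i<j$ with $X_k \mid (u_j : u_i)$, equivalently $X_k u_j$ is divisible by some earlier generator $u_i$. Since all generators have the same degree $ns-s$, linear quotients (Lemma \ref{linQuoLem}) guarantee $u_j : u_i$ is generated by a single variable for the relevant $i$, so checking membership reduces to finding, for each candidate variable $X_k$, an earlier generator $u_i$ with $u_i \mid X_k u_j$.

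First I would fix notation: write $u_j = \mathbf{X}^s/v$ with $\deg(v)=s$, so that $u_j$ is determined by the monomial $v$ of degree $s$ (the ``deficiency'' monomial), and the reverse lex order on the $u_j$ corresponds to an order on the $v$'s. The key translation is that $X_k u_j = \mathbf{X}^s \cdot X_k / v$, and for this to be divisible by another generator $u_i = \mathbf{X}^s/w$ we need $w \mid v/X_k \cdot \mathbf{X}^{?}$; more precisely $u_i \mid X_k u_j$ iff $\mathbf{X}^s/w \mid \mathbf{X}^s X_k/v$ iff $v \mid w X_k$, i.e. $w$ is obtained from $v$ by removing one factor of $X_k$ and adding one factor of some other variable. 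So the natural candidate is $w = v X_\ell / X_k$ for some $\ell$, provided $X_k \mid v$. This immediately suggests why we need $i \in \supp(v)$: if $X_k \nmid v$ then $v \nmid wX_k$ cannot be arranged to give a \emph{larger} generator, and the constraint $i<n$ will come from the orientation of the reverse lexicographic order.

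The main work, and the step I expect to be the real obstacle, is the verification of both inclusions with the reverse lex order handled correctly. For the inclusion $\set(u_j)\subseteq\{X_i : i<n,\ i\in\supp(v)\}$, I would show that any $X_k\in\set(u_j)$ forces $k\in\supp(v)$ (otherwise removing $X_k$ from $v$ is impossible) and $k<n$ (because producing an earlier generator $u_i>u_j$ in reverse lex order requires replacing $X_k$ by a variable $X_\ell$ with $\ell>k$, which is only possible when $k<n$). For the reverse inclusion, given $i<n$ with $i\in\supp(v)$, I would exhibit an explicit earlier generator: take $w = v X_n/X_i$ (swap a factor of $X_i$ for $X_n$), set $u_i = \mathbf{X}^s/w$, check $\deg(w)=s$ so $u_i\in M(J(K_n)^s)$, verify $u_i > u_j$ in reverse lex order, and confirm $X_i \mid (u_j : u_i)$. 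The delicate point is the direction of the reverse lexicographic comparison—one must be careful that dividing by $X_i$ and multiplying by the \emph{largest-index} variable $X_n$ genuinely yields a reverse-lex-larger monomial generator, and that this choice lands among $u_1,\dots,u_{j-1}$. Once the order-theoretic bookkeeping is pinned down, both inclusions follow, and the characterization of $\set(u_j)$ is complete.
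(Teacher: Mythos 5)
Your proposal is correct and takes essentially the same route as the paper's proof: both reduce $X_k\in\set(u_j)$ to divisibility of $X_ku_j$ by an earlier generator of the same degree, deduce $k\in\supp(v)$ from the resulting single-variable swap $w=vX_\ell/X_k$ and $k<n$ from the orientation of the reverse lexicographic order, and for the converse exhibit the same explicit witness $u'=X_iu_j/X_n$ (equivalently $w=vX_n/X_i$), which is an earlier generator precisely because $j\geq 2$ forces $X_n\mid u_j$.
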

\begin{proof}
Let $X_i \in \set(u_j)$, where $1 \leq i \leq n$. Then $X_iu_j\in \langle u_1,\dots, u_{j-1}\rangle$. In particular, there exists $v_1 \in \Mon(S)$ such that $X_iu_j=u_lv_1$, for some $1 \leq l \leq {j-1}$. Note that $\deg(u_l)=\deg(u_j)$ implies that $\deg(v_1)=1$, hence $v_1=X_{j_i}$ for some $j_i\in [n]$. Thus, we get $\dfrac{X_iu_j}{X_{j_i}}=u_l \in M(J(K_n)^s).$ This implies that that $m_i\left(\dfrac{X_iu_j}{X_{j_i}}\right)\leq s$, and hence $m_i\left(\dfrac{u_j}{X_{j_i}}\right)\leq s-1$. Since $X_i\neq X_{j_i}$, we get $i\in \supp(v)$. Suppose $i=n$. Then $X_nu_j=u_lX_{j_i}$ implies that $m_n(u_l)>m_n(u_j)$, which is a contradiction to the fact that $u_l >_{revlex}u_j$.

Conversely, let $i \in \supp(v)$, $i\neq n$. Since $j\neq 1$, we know that $X_n | u_j$. Now, consider a monomial $u'=\frac{X_iu_j}{X_n}$. Clearly, $u'>_{revlex}u_j$ and $X_iu_j=u'X_n$. This completes the proof.
\end{proof}
Now we are in position to compute the Betti numbers of $J(K_n)^s$. 
 
\begin{theorem}\label{comBettiThm} The Betti numbers of $J(K_n)^s$ are given by $$ \beta_i^S(J(K_n)^s)=\binom{n-1}{i}\binom{n-1-i+s}{n-1}. $$
\end{theorem}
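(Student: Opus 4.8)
The plan is to use the linear-quotient structure of $J(K_n)^s$ established in Lemma \ref{linQuoLem} together with the explicit computation of $\set(u_j)$ from Lemma \ref{LemSet}, and then apply the Herzog–Takayama formula (Lemma \ref{resRem}). Since $J(K_n)^s$ has linear quotients and every generator has the same degree $s(n-1)$, the iterated mapping cone gives a minimal free resolution, and the total Betti number $\beta_i^S(J(K_n)^s)$ equals the number of pairs $(\sigma,u)$ with $u\in M(J(K_n)^s)$, $\sigma\subset\set(u)$, and $|\sigma|=i$. Thus I would reduce the whole problem to the purely combinatorial count
\[
\beta_i^S(J(K_n)^s)=\sum_{u\in M(J(K_n)^s)}\binom{|\set(u)|}{i}.
\]

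First I would rewrite this sum by grouping generators according to the value $t=|\set(u)|$. Recall from Lemma \ref{LemSet} that for $u=\mathbf{X}^s/v$ we have $\set(u)=\{X_i:i<n,\ i\in\supp(v)\}$, so $|\set(u)|$ counts the number of indices strictly less than $n$ appearing in the support of $v$ (equivalently, the number of distinct variables dividing $v$, excluding $X_n$). Hence I must count, among degree-$s$ monomials $v$ in $n$ variables, how many have exactly $t$ distinct variables among $X_1,\dots,X_{n-1}$ in their support. Choosing which $t$ of the first $n-1$ variables occur gives $\binom{n-1}{t}$ choices; the remaining exponents must distribute $s$ among those $t$ chosen variables (each with exponent $\geq 1$) together with the variable $X_n$ (exponent $\geq 0$). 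A stars-and-bars count yields the number of such $v$, and then
\[
\beta_i^S(J(K_n)^s)=\sum_{t\geq i}\binom{t}{i}\binom{n-1}{t}N(t),
\]
where $N(t)$ is the number of ways to complete the exponent vector once the support in $[n-1]$ is fixed to have size exactly $t$.

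The main obstacle is the binomial-sum simplification. After substituting the stars-and-bars value for $N(t)$, the double sum over $t$ must collapse to the clean product $\binom{n-1}{i}\binom{n-1-i+s}{n-1}$. I expect the cleanest route is to use the identity $\binom{t}{i}\binom{n-1}{t}=\binom{n-1}{i}\binom{n-1-i}{t-i}$ to pull the factor $\binom{n-1}{i}$ outside the sum, reducing the remaining sum over $t$ (equivalently over $t-i$) to a single Vandermonde-type convolution that evaluates to $\binom{n-1-i+s}{n-1}$. An alternative, possibly cleaner, bookkeeping avoids the intermediate $N(t)$ entirely: I would interpret $\binom{|\set(u)|}{i}$ as choosing an $i$-subset $\sigma\subset\set(u)$ and count pairs $(\sigma,v)$ directly, where $\sigma$ is an $i$-subset of $\{1,\dots,n-1\}$ forced to lie in $\supp(v)$; the count of degree-$s$ monomials $v$ whose support contains a fixed $i$-set is again a single stars-and-bars expression $\binom{s-i+n-1}{n-1}$, and multiplying by the $\binom{n-1}{i}$ choices of $\sigma$ gives the formula immediately. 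I would verify the boundary cases ($i=0$ recovering $\mu(J(K_n)^s)=\binom{n-1+s}{n-1}$, which matches the number of degree-$s$ monomials, and $s=1$ recovering the Koszul-type Betti numbers of $J(K_n)$) as a sanity check.
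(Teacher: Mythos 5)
Your proposal is correct and follows essentially the same route as the paper: linear quotients (Lemma \ref{linQuoLem}), the description of $\set(u)$ (Lemma \ref{LemSet}), grouping generators by $t=|\set(u)|$ to get $\binom{n-1}{t}\binom{s}{t}$ of each type, and then the Chu--Vandermonde collapse of $\sum_t\binom{n-1}{t}\binom{s}{t}\binom{t}{i}$. Your alternative bookkeeping --- counting pairs $(\sigma,v)$ with $\sigma\subset\supp(v)\cap[n-1]$ directly, which yields $\binom{n-1}{i}\binom{n-1-i+s}{n-1}$ without any binomial identity --- is a slightly cleaner variant of the same argument, but not a genuinely different proof.
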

\begin{proof}
Firstly, we show that $|A_t(J(K_n)^s)|= \binom{n-1}{t}\binom{s}{t}$. In view of Lemma \ref{LemSet}, we get 
$$ A_t(J(K_n)^s)=\left\{\dfrac{\mathbf{X}^s}{v}:v \in \Mon(S), \deg(v)=s ~\text{and}~ |\supp(v)\cap [n-1] |=t\right\}.$$ Since each monomial $v \in A_t(J(K_n)^s)$ corresponds to a unique monomial $v'$ in $(n-1)$ variables of degree less than equal to $s$ with $|\supp(v')|=t$, we get $|A_t(J(K_n)^s)|= \binom{n-1}{t}\binom{s}{t}$.
Now using Lemma \ref{resRem}, we get 
\begin{equation*}
\beta_i^S(J(K_n)^s)=
 \sum_{t=0}^{n-1}\binom{n-1}{t}\binom{s}{t}\binom{t}{i} 
 = \binom{n-1}{i}\binom{n-1-i+s}{n-1},~~ 
\end{equation*}
where the last equality follows from  the Chu-Vandermonde identity \cite[Page 26]{Co}. 
\end{proof}
As an immediate consequence, we get the following result.
\begin{theorem}\label{treBettiThm}
	Let $T$ be a tree on $n$ vertices and $G$ the complement graph of $T$. Let $J(G)$ be the cover ideal of $G$. Then the Betti numbers of $J(G)^s$ are given by
	$$\beta_i^S(J(G)^s)=\binom{n-2}{i}\binom{n-2-i+s}{n-2}. $$
\end{theorem}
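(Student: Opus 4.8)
The goal is to compute the graded Betti numbers of $J(G)^s$ where $G$ is the complement of a tree $T$ on $n$ vertices, and the claimed formula $\beta_i^S(J(G)^s)=\binom{n-2}{i}\binom{n-2-i+s}{n-2}$ is precisely the formula of Theorem \ref{comBettiThm} for the complete graph on $n-1$ vertices. This observation tells us exactly what to do: show that the Betti numbers of $J(G)^s$ coincide with those of $J(K_{n-1})^s$. The plan is to reduce the complement-of-a-tree case to the complete-graph case via the results already established in Section~3.

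The reduction I would use is the following. By Corollary \ref{sameBettiCor}, the Betti numbers $\beta^S(J(G)^s)$ do not depend on the choice of tree $T$ (here $J(G)$ is the cover ideal of the complement of $T$, which matches the setup of Lemma \ref{hilbLem} and Corollary \ref{sameBettiCor}). Hence it suffices to compute the Betti numbers for one convenient tree. The natural choice is the star $T$ whose leaves are all attached to a single central vertex, say vertex $n$; then the complement graph $G=T^c$ is exactly the complete graph on the vertices $\{1,\dots,n-1\}$ together with the isolated vertex $n$. For this $G$, the cover ideal $J(G)$ should reduce to the cover ideal $J(K_{n-1})$ inside the larger polynomial ring: every minimal vertex cover of $G$ must contain the isolated vertex's... (more carefully, since $n$ lies on no edge, the minimal covers of $G$ are exactly the minimal covers of $K_{n-1}$ on $\{1,\dots,n-1\}$), so $J(G)=J(K_{n-1})S$, the extension to $S=\sk[X_1,\dots,X_n]$ of the cover ideal of $K_{n-1}$ in $\sk[X_1,\dots,X_{n-1}]$.

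Once this identification is made, the computation is immediate: extending an ideal by an extra polynomial variable $X_n$ (a faithfully flat, in fact polynomial, extension) leaves all graded Betti numbers unchanged, so $\beta_i^S(J(G)^s)=\beta_i^{\sk[X_1,\dots,X_{n-1}]}(J(K_{n-1})^s)$. Applying Theorem \ref{comBettiThm} with $n$ replaced by $n-1$ gives $\beta_i(J(K_{n-1})^s)=\binom{n-2}{i}\binom{(n-1)-1-i+s}{(n-1)-1}=\binom{n-2}{i}\binom{n-2-i+s}{n-2}$, which is exactly the claimed formula.

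The only real point requiring care, and the step I expect to be the main (though minor) obstacle, is justifying the identification $J(T^c)=J(K_{n-1})S$ for the star and confirming that the isolated vertex $n$ contributes nothing to the minimal cover sets. This is a clean combinatorial check: a vertex that meets no edge can be omitted from, and cannot be forced into, any minimal cover. After that, invoking Corollary \ref{sameBettiCor} to pass from the star to an arbitrary tree and using the base-change invariance of Betti numbers under adjoining a variable are both standard and require no computation. I would therefore organize the proof as: (1) reduce to a single tree via Corollary \ref{sameBettiCor}; (2) take the star and identify its cover ideal with that of $K_{n-1}$; (3) quote Theorem \ref{comBettiThm} with the index shift.
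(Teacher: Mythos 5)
Your proposal is correct and follows exactly the paper's argument: reduce to the star via Corollary \ref{sameBettiCor}, observe that the complement is then $K_{n-1}$ (plus an isolated vertex), and apply Theorem \ref{comBettiThm} with $n$ replaced by $n-1$. The paper's proof is just a terser version of yours; your extra care about the isolated vertex and the invariance of Betti numbers under adjoining a variable fills in details the paper leaves implicit.
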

\begin{proof}
	By Corollary \ref{sameBettiCor}, we may assume that $T$ be a star graph, and hence its complement is a complete graph on $n-1$ vertices. Thus the result follows from Theorem \ref{comBettiThm}.
\end{proof}
\section{Regularity of Powers of Cover Ideals of Complete Multipartite Graphs}
\noindent
In this section, our goal is to prove \cite[Conjecture 4.11]{AN}.
Let $J(G)$ be the cover ideal of a complete $n$-partite graph $G$ on the vertex set $V$ with partition $V_1\sqcup \cdots \sqcup V_n$, where $V_i=\{X_{i,j} \}$, $1 \leq j \leq w_i.$ Then by taking $X_i=\prod_{j=1}^{w_i}X_{i,j}$, one can identify $J(G)$ with the cover ideal of a complete graph on vertices $X_1,\dots,X_n$. We set $\deg(X_{i,j})=1$ and $w(X_i)=w_i$. Thus to compute the regularity of powers of the cover ideal  of a complete multipartite graphs, we compute the regularity of powers of the cover ideal of a complete graph $K_n$ on vertices $X_1,\dots,X_n$ with $w(X_i)=w_i$.

\begin{notation}{\rm 
	Let $\sigma\subset \{X_1,\dots, X_n\} $ and $u$ be a monomial in $S$. Then we denote $$m(u,\sigma)= u\prod_{X_k\in \sigma}X_k.$$
	}
\end{notation}
\begin{remark}\label{remSetDeg}{\rm Let $J(K_n)$ be the cover ideal of a complete graph $K_n$ on vertices $X_1,\ldots,X_n$ and $M(J(K_n)^s)=\{u_1,\ldots,u_r\}$, where $u_1>u_2>\cdots>u_r$ in the reverse 
lexicographical order. Further, if we assume  $w(X_i)=w_i$ with $w_i\leq w_j$ for all $i\leq j$, then observe the following. 
		\begin{enumerate}[i)]
			\item If $u\in M(J(K_n)^s)$ and $\sigma \subset \set(u)$, then by Lemma \ref{LemSet}, we get that for any $i$, $X_i^{s+1}$ does not divide $m(u,\sigma)$. In other words, we have $m_i(m(u,\sigma))\leq s,$ for all $i$.
			 Further, if $|\sigma|=i-1$, then $\deg_w(m(u,\sigma))\leq w_1(i-1)+s\sum_{l=2}^{n}w_l$.
			 \item The proof of Lemma \ref{resRem} remains valid even if we assign weight $w(X_i)=w_i$ for each $i$. Also in this case, $\deg_w(f(\sigma, u))=\deg_w(u)+\sum\limits_{X_i \in \sigma}w_i$.
			 \item  It follows from Lemma \ref{resRem} and Lemma \ref{LemSet} that 
			 $\pdim_S(S/J(K_n)^s)=\begin{cases}
s+1 & \text{if}~ s<n-1,\\
n & \text{otherwise}.
\end{cases}$
		\end{enumerate}
	
}\end{remark}

Now we proceed to calculate the regularity of $S/J(K_n)^s$ in the above set-up.

\begin{theorem}
 Let $S=\sk[X_1,\dots,X_n]$ with $w(X_i)=w_i$ and  $K_n$ be a complete graph on vertices $X_1,\dots,X_n$. Further, if we assume $w_1 \leq \dots \leq w_n$, then 
 $$ \reg(S/J(K_n)^s)=\begin{cases}
s\sum_{l=1}^{n}w_l-(s+1)  & \text{if}~ s<n-1,\\
s\sum_{l=1}^{n}w_l-w_1(s-n+1)-n  & \text{otherwise}.
\end{cases}$$ 
\end{theorem}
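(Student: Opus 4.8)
The plan is to read the graded Betti numbers of $S/J(K_n)^s$ directly off the minimal free resolution provided by Lemma \ref{resRem}, and then to maximize $j-i$ over its basis elements. By Lemma \ref{linQuoLem} the ideal $J(K_n)^s$ has linear quotients, and by Remark \ref{remSetDeg}(ii) the Herzog--Takayama resolution stays minimal in the weighted $\deg_w$-grading, with each basis element $f(\sigma,u)$ lying in homological degree $|\sigma|+1$ and weighted degree $\deg_w(u)+\sum_{X_k\in\sigma}w_k$. Hence $\beta^S_{i,j}(S/J(K_n)^s)\neq 0$ exactly when some pair $(u,\sigma)$ with $u\in M(J(K_n)^s)$, $\sigma\subseteq\set(u)$ and $|\sigma|=i-1$ realizes $j=\deg_w(u)+\sum_{X_k\in\sigma}w_k$. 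Since $\beta_{0,0}=1$ contributes only $0$ to the regularity, this reduces the theorem to the purely combinatorial optimization
\[
\reg(S/J(K_n)^s)=\max_{u,\sigma}\Big(\deg_w(u)+\sum_{X_k\in\sigma}(w_k-1)-1\Big).
\]

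First I would fix $u$ and optimize over $\sigma\subseteq\set(u)$. Because every weight satisfies $w_k\geq 1$, adjoining any $X_k\in\set(u)$ to $\sigma$ changes $\sum_{X_k\in\sigma}(w_k-1)$ by the nonnegative amount $w_k-1$, so the maximum is always attained at $\sigma=\set(u)$. Writing $u=\mathbf{X}^s/v$ with $\deg(v)=s$, and using $\set(u)=\{X_i:i<n,\ i\in\supp(v)\}$ from Lemma \ref{LemSet} together with $\deg_w(u)=s\sum_{l=1}^n w_l-\deg_w(v)$, the problem becomes: maximize over monomials $v$ of degree $s$ the quantity
\[
s\sum_{l=1}^n w_l-1-\Big(\deg_w(v)-\!\!\sum_{i\in\supp(v)\cap[n-1]}\!\!(w_i-1)\Big).
\]
The constant $s\sum_l w_l-1$ is irrelevant, so I would minimize the ``cost'' $c(v)=\deg_w(v)-\sum_{i\in\supp(v)\cap[n-1]}(w_i-1)$, which I would analyze variable by variable through its marginal increments in the exponents $m_i=m_i(v)$ subject to $\sum_i m_i=s$.

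The key computation is that placing the first unit of degree on any $X_i$ with $i<n$ costs $1$, each further unit on such an $X_i$ costs $w_i$, and every unit on $X_n$ costs $w_n$. Since $w_i\geq 1$ and $w_1\leq\cdots\leq w_n$, an optimal $v$ should fill the $n-1$ cheap unit slots on $X_1,\dots,X_{n-1}$ first and then, if degree remains, pile all surplus units on the minimal-weight variable $X_1$. When $s\leq n-1$ this gives $v=X_{i_1}\cdots X_{i_s}$ supported on $s$ distinct variables of $[n-1]$, with $c(v)=s$, yielding $\reg(S/J(K_n)^s)=s\sum_l w_l-(s+1)$; when $s\geq n-1$ it gives $v=X_1^{\,s-n+2}X_2\cdots X_{n-1}$, with $c(v)=(n-1)+(s-n+1)w_1$, yielding $\reg(S/J(K_n)^s)=s\sum_l w_l-w_1(s-n+1)-n$. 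The two expressions coincide at $s=n-1$, so the placement of that boundary case into the second branch is merely cosmetic, and Remark \ref{remSetDeg}(iii) confirms that the optimizing pair $(v,\ \sigma=\set(u))$ lands within the projective dimension, so the maximum is genuinely attained by a nonzero Betti number.

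The main obstacle I anticipate is making the greedy step fully rigorous: one must show that no reallocation of the $s$ units of degree among the variables can decrease $c(v)$ below the greedy value. I would settle this with an exchange argument, verifying that moving a unit of degree from a more expensive slot to a cheaper one never raises $c(v)$, where the monotonicity $w_1\leq\cdots\leq w_n$ (so that surplus is cheapest on $X_1$) and the inequality $w_i\geq 1$ (so that fresh first-slots are never worse than repeat-slots) are exactly the structural facts driving both the optimality of the greedy vector and the dichotomy between the regimes $s<n-1$ and $s\geq n-1$.
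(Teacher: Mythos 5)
Your proposal is correct and takes essentially the same approach as the paper: both read the weighted Betti degrees off the Herzog--Takayama resolution attached to the linear quotient order (Lemmas \ref{linQuoLem}, \ref{resRem} and \ref{LemSet}) and then solve the resulting combinatorial maximization of $j-i$ over pairs $(u,\sigma)$. The only difference is organizational: the paper fixes the homological degree $i$, computes $d_i=w_1(i-1)+s\sum_{l=2}^{n}w_l$ via Remark \ref{remSetDeg}(i) and explicit witnesses, and then combines the monotonicity of $d_i-i$ with the value of $\pdim$ from Remark \ref{remSetDeg}(iii), whereas you optimize jointly by a greedy/exchange argument on the exponent vector of $v$, which is easily made airtight by the lower bound $c(v)\ge t+w_1(s-t)$ for $t=|\supp(v)\cap[n-1]|\le\min(s,n-1)$ and yields the same two extremal monomials and the same answer.
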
 

\begin{proof}
 Firstly, using Lemma \ref{linQuoLem} we get $J(K_n)^s$ has  linear quotients with reverse lexicographic order. Now, Lemma \ref{resRem} implies that a basis element of $i$th component $F_i$ of a graded minimal free resolution $\F$ of $S/J(K_n)^s$ is given as following:
$$f(\sigma;u)~\text{with}~u\in M\left(J(K_n)^s\right), \sigma \subset \set(u), |\sigma|= i-1,$$
where $\deg_w(f(\sigma; u))=\deg_w(m(\sigma; u)).$ This implies that $\beta^S_{i,j}\left(S/J(K_n)^s \right)\neq 0$ if and only if there exists some $\sigma \subset \set(u)$ with $|\sigma|=i-1$ such that $j=\deg_w(m(\sigma; u))$.  Let $d_i=\max\{ {j:\beta_{i,j}\left( S/J(K_n)^s  \right)\neq 0} \}$. Then
$$d_i=\max \{\deg_w(m(\sigma,u)):\sigma \subset \set(u), |\sigma|= i-1 ~\text{and}~ u\in M\left(J(K_n)^s\right) \}.$$ By Remark \ref{remSetDeg}(ii), it is easy to see that $s \geq i-1$. Now, for $i<n$ take $u=\dfrac{X^s}{X_1^{s-(i-1)}X_2\cdots X_i}$ with $\sigma=\{X_2,\ldots,X_i  \}$, and for $i=n$
 take $u=\dfrac{X^s}{X_1^{s-(i-2)}X_2\cdots X_{i-1}}$ with $\sigma=\{X_1,\ldots,X_{i-1}  \}$. Now note that in the both cases $m(\sigma,u)=X_1^{i-1}X_2^s \cdots X_n^s$, and hence $\deg_w(m(\sigma,u))=w_1(i-1)+s\sum_{l=2}^{n}w_l$. Thus Remark \ref{remSetDeg}(i) gives $d_i=w_1(i-1)+s\sum_{l=2}^{n}w_l$. Note that $d_i-i \leq d_{i+1}-(i+1)$. Hence from Remark \ref{remSetDeg}(ii), the result follows.
\end{proof}
As an immediate consequence, we get the following corollary.
\begin{corollary}
	Let $G$ be a complete multipartite graph with partition on the vertex set $V_1\sqcup\cdots \sqcup V_n$. If $|V_i|=w_i$ with $w_i\leq w_{i+1}$, then 
	$$ \reg(S/J(G)^s)=\begin{cases}
	s\sum_{l=1}^{n}w_l-(s+1)  & \text{if}~ s<n-1,\\
	s\sum_{l=1}^{n}w_l-w_1(s-n+1)-n  & \text{otherwise}.
	\end{cases}$$
\end{corollary}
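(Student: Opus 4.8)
The plan is to deduce this from the preceding Theorem by making the identification set up at the start of Section 5 precise on the level of graded free resolutions. First I would pin down the combinatorics: in a complete $n$-partite graph the only independent sets are subsets of a single part, so the maximal independent sets are exactly the parts $V_1,\dots,V_n$ and the minimal vertex covers are their complements $V\setminus V_i$. Writing $X_i=\prod_{j=1}^{w_i}X_{i,j}$ and $\mathbf{X}=\prod_i X_i$, this gives $J(G)=\langle \mathbf{X}/X_i : i\in[n]\rangle$, which is literally the cover ideal of $K_n$ on the $X_i$, and the ordinary degree $\deg(X_i)=w_i$ matches the assigned weight $w(X_i)=w_i$.

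Next I would isolate the structural fact that makes the weighting faithful: every minimal generator of $J(G)^s$ has the form $\mathbf{X}^s/\prod_{k=1}^s X_{i_k}$, so within each part all the variables $X_{i,j}$ occur with one common exponent. This block-constant property is inherited by all the monomials $m(u,\sigma)$ appearing in the linear-quotient and mapping-cone description of the resolution of $S/J(K_n)^s$. Consequently the ordinary $\mathbb{Z}$-grading of $S=\sk[X_{i,j}]$ agrees, summand by summand, with the weighted grading $\deg_w$ on $\sk[X_1,\dots,X_n]$ used in the preceding Theorem, where each basis element $f(\sigma,u)$ carries the shift $\sum_{X_i\in\sigma}w_i$ recorded in Remark \ref{remSetDeg}(ii).

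With the two gradings identified, the graded Betti numbers $\beta^S_{i,j}(S/J(G)^s)$ coincide with the weighted graded Betti numbers of $S/J(K_n)^s$ computed in the preceding Theorem, and since $\reg=\max\{j-i:\beta_{i,j}\neq 0\}$ (Definition \ref{puredef}) is read off from exactly this data, the two regularities are equal. Substituting the Theorem's formula then yields the stated case split according to whether $s<n-1$ or not.

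The step I expect to be the main obstacle is the second one: rigorously transferring the resolution from the few-variable weighted ring to the many-variable ring $\sk[X_{i,j}]$. The delicacy is that over $\sk[X_{i,j}]$ the colon ideals governing the quotients are generated by the block products $X_i=\prod_{j=1}^{w_i}X_{i,j}$ of degree $w_i$, not by single variables, so one cannot invoke the Herzog--Takayama Lemma \ref{resRem} verbatim. I would instead argue that replacing each weighted variable $X_i$ (of weighted degree $w_i$) by the corresponding block product (of ordinary degree $w_i$) throughout the mapping-cone construction preserves both minimality and all internal degrees, so that the degree bookkeeping of Remark \ref{remSetDeg} carries over unchanged; checking that no two degrees collide in a way that would perturb the Betti table is where the argument needs the most care.
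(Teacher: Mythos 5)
Your proposal is correct and follows essentially the same route as the paper: the corollary is stated there with no separate proof, being read off immediately from the identification (set up at the start of Section 5) of $J(G)$ with the cover ideal of $K_n$ on the block variables $X_i=\prod_{j}X_{i,j}$ with weights $w(X_i)=w_i$, which is exactly your first two steps followed by substitution into the preceding theorem. The transfer-of-resolution subtlety you flag is genuine but is not addressed in the paper either; it can be disposed of cleanly by noting that $S$ is a free graded module over the subring $\sk[X_1,\dots,X_n]$ generated by the block products, so base change along this inclusion preserves the minimal graded free resolution and all internal degrees.
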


\renewcommand{\bibname}{References}

\end{document}